\documentclass{amsart}
\usepackage[utf8]{inputenc}
\usepackage{amsfonts}
\usepackage{amsmath}
\usepackage{mathtools}
\usepackage{amsthm, mathrsfs}
\usepackage{amssymb,color}
\usepackage{latexsym}
\usepackage{enumerate}
\usepackage{tikz-cd} 
\usepackage{enumitem}
\usepackage{hyperref}
\usepackage{rotating}

\newtheorem{theorem}{Theorem}[section]
\newtheorem{lemma}[theorem]{Lemma}
\newtheorem{proposition}[theorem]{Proposition}
\newtheorem{corollary}[theorem]{Corollary}

\numberwithin{equation}{section}

\theoremstyle{definition}
\newtheorem{definition}[theorem]{Definition}

\newtheorem{remark}[theorem]{Remark}

\theoremstyle{remark}

\newcommand{\A}{\mathcal{A}}
\newcommand{\C}{\mathcal{C}}
\newcommand{\D}{\mathcal{D}}

\newcommand{\B}{\mathcal{B}}

\newcommand{\E}{\mathcal{E}}

\newcommand{\Z}{\mathcal{Z}}

\newcommand{\KK}{\mathfrak{K}}

\newcommand{\ZZ}{\mathfrak{Z}}
\newcommand{\CC}{\mathfrak{C}}

\newcommand{\vphi}{\varphi}

\newcommand{\Ad}{\operatorname{Ad}}
\newcommand{\id}{\operatorname{id}}

\newcommand{\Tr}{\operatorname{Tr}}
\newcommand{\diag}{\operatorname{diag}}
\newcommand{\Img}{\operatorname{Im}}

\begin{document}

\title{Strongly self-absorbing $C^*$-algebras and Fra\"\i ss\'e limits}

\author{Saeed Ghasemi}
\address{Institute of Mathematics, Czech Academy of Sciences, Czech Republic}
\email{\texttt{ghasemi@math.cas.cz}}
\thanks{This research is supported by the GA\v CR project 19-05271Y and RVO: 67985840}

\maketitle

\begin{abstract}
    We show that the Fra\"\i ss\'e limit of a category of unital separable $C^*$-algebras which is sufficiently closed under tensor products of its objects and morphisms is strongly self-absorbing, given that it has approximately inner  half-flip. We use this connection between  Fra\"\i ss\'e limits and strongly self-absorbing $C^*$-algebras to give a rather elementary proof for the well known fact that the Jiang-Su algebra is strongly self-absorbing.
    
    \ 

\noindent
\textbf{MSC (2020):} 
46L05, 
46L35, 
46M15. 

\noindent
\textbf{Keywords:} Jiang-Su algebra, strongly self-absorbing $C^*$-algebra, approximately inner half-flip,  Fra\"\i ss\'e\ limit. 
\end{abstract}

\section{Introduction}
A separable unital $C^*$-algebra $\D$ is called ``strongly self-absorbing" if it is not $*$-isomorphic to $\mathbb C$ and there is a $*$-isomorphism $\vphi: \D \to \D \otimes \D$ which is approximately unitarily equivalent to  $\id_\D \otimes 1_\D$. The UHF-algebras of infinite type, the Cuntz algebras $\mathcal O_2$, $\mathcal O_\infty$ and the Jiang-Su algebra $\Z$ are all strongly self-absorbing.
These $C^*$-algebras play a central role in Elliott's classification program of separable nuclear $C^*$-algebras by the K-theoretic data. In fact, the classification program has been almost exclusively focused on the class of (separable, unital and nuclear) $C^*$-algebras $\A$ that tensorially absorb a strongly self-absorbing $C^*$-algebra $\D$ (such $\A$ is called $\D$-stable). 
Strongly self-absorbing $C^*$-algebras are systematically studied in \cite{Toms-Winter}. They are automatically simple, nuclear and are either purely infinite or stably finite with at most one tracial state. Among strongly self-absorbing $C^*$-algebras the Jiang-Su algebra $\Z$ 
has received special attention in recent years. This is mostly due to the fact that strongly self-absorbing $C^*$-algebras are all $\Z$-stable \cite{Winter-2011} and therefore the class of $\Z$-stable $C^*$-algebras is the largest possible class of $\D$-stable $C^*$-algebras, for any strongly self-absorbing $C^*$-algebra $\D$. The remarkable classification of separable, simple, unital, nuclear, and $\Z$-stable $C^*$-algebras satisfying the UCT by K-theoretic invariants, is the pinnacle of the classification program (cf. \cite{Winter-classification}). 

The Jiang-Su algebra is a simple, separable, unital, nuclear, projectionless and infinite-dimensional $C^*$-algebra which has a unique tracial state (monotracial) and it is KK-equivalent to the $C^*$-algebra of complex numbers.
In their original paper \cite{Jiang-Su}, Jiang and Su define $\Z$ as the unique inductive limit of a sequence of dimension-drop algebras and unital $*$-homomorphisms which is simple and monotracial. This definition involves first constructing such a sequence and then showing that its limit has to be unique up to isomorphism via a classification result (\cite[Theorem 6.2]{Jiang-Su}). Since then many different characterizations of $\Z$ have been given (cf. \cite{Z-revisited}, \cite{Dadarlat-Toms} and \cite{Jacelon-Winter}).

It was shown already in \cite{Jiang-Su} that $\Z$ is strongly self-absorbing.
The original proof of Jiang and Su consists of essentially two main and mostly independent steps. One step is to show that $\Z$ has approximately inner  half-flip; recall that a $C^*$-algebra $\D$ has approximately inner  half-flip if the first factor embedding $\id_\D \otimes 1_{\D} : \D \to \D \otimes \D$ is approximately unitarily equivalent to the second factor embedding $ 1_{\D} \otimes \id_\D : \D \to \D \otimes \D$. This part of the proof follows from the basic properties of the constructed sequence of prime dimension-drop algebras and the unital $*$-embeddings whose limit is $\Z$, using an intricate continuous functional calculus argument (this is proved in \cite[Proposition 8.3]{Jiang-Su}, see also Remark \ref{Z-aihf} for a rough sketch of the proof). The second step is to show that there is an asymptotically central sequence of inner automorphisms of $\Z$ (\cite[Corollary 8.6]{Jiang-Su}). This step is essentially carried out by showing that  every unital endomorphism of $\Z$ is approximately inner and that there exists a unital $*$-homomorphism from $\Z \otimes \Z$ into $\Z$. The proofs of both of these statements involve classification results and the use of advanced tools from KK-theory. Thus, it would be desirable to find a proof of the fact that $\Z$ is strongly self-absorbing, which replaces the second step by arguments that do not depend (at least not so heavily) on the K-theoretic data and the classification tools. Towards this goal, recently a new and easier proof has been discovered by A. Schemaitat \cite{Schemaitat}, which uses a characterization of $\Z$ as a stationary inductive limit
of a generalized prime dimension-drop algebras and a trace-collapsing endomorphism, given in \cite{Z-revisited}.

In this paper we give a different proof of the fact that $\Z$ is strongly self-absorbing, which follows a more general approach, and does not use any classification tools nor any characterization of $\Z$. For this, we will show that the original sequences that are constructed in \cite[Proposition 2.5]{Jiang-Su} by Jiang and Su to define $\Z$, can be chosen (note that the construction is not canonical) so that they are all ``Fra\"\i ss\'e sequences" of a category of the building blocks and, as a consequence, the uniqueness of their limit, $\Z$, follows from a standard approximate intertwining arguments. This is an adaptation of a critical observation made in \cite{Eagle} and  \cite{Masumoto} that $\Z$ can be realized as the ``Fra\"\i ss\'e limit" of the category of prime dimension-drop algebras and trace-persevering unital $*$-embeddings. The main theorem of this paper (Theorem \ref{thm-F-ssa-intro}) shows that, roughly speaking, the Fra\"\i ss\'e limits of categories of $C^*$-algebras that are ``sufficiently" closed under tensor products and have approximately inner  half-flip are strongly self-absorbing. Therefore an application of this theorem provides an alternative proof of the fact that $\Z$ is strongly self-absorbing, in which the second step of the original proof is replaced by straightforward intertwining arguments concerning the sequences of prime dimension-drop algebras and their tensor products. Let us also point out that our proof does not require any Fra\"\i ss\'e theory beyond the approximate back-and-forth arguments (the standard approximate intertwining arguments), since in this particular case the Fra\"\i ss\'e sequences are ``built by hand" and there is no need to invoke the main existence theorem of Fra\"\i ss\'e limits (see Theorem \ref{uni-hom}).

It is shown in \cite{Eagle} and  \cite{Masumoto} that the category $\ZZ$ whose objects are all prime dimension-drop algebras with fixed faithful traces $(\Z_{p,q}, \sigma)$ and the morphisms are unital trace-preserving $*$-embeddings, is a ``Fra\"\i ss\'e category" (see Definition \ref{Fraisse-cat-def}) and its Fra\"\i ss\'e limit is the Jiang-Su algebra $(\Z, \nu)$ with its unique trace $\nu$. This is equivalent to saying that there is a sequence of objects $A_n=(\Z_{p_n, q_n}, \tau_n)$ and morphisms $ \vphi_n^{n+1}: A_n \to A_{n+1}$ in $\ZZ$ such that $\Z$ is the inductive limit of the sequence $(A_n, \vphi_n^{n+1})$ and satisfies the following properties:
\begin{itemize}
    \item[$(*)$] for every $(\Z_{p,q}, \sigma)$ in $\ZZ$ there exists a unital trace-preserving $*$-embedding $\psi:(\Z_{p,q}, \sigma) \to A_n$, for some $n$,
    \item[($**$)] for every $n$ and unital trace-preserving $*$-embedding $\gamma: A_n \to (\Z_{p,q}, \sigma)$ and for every $\epsilon>0$ and a finite subset $F$ of $A_n$, there is a unital trace-preserving $*$-embedding $\eta: (\Z_{p,q}, \sigma) \to A_m$ for some $m>n$ such that 
$\|\eta \circ \gamma(a) - \vphi_{n}^m(a)\|< \epsilon$
for every $a$ in $F$, where $\vphi_n^m = \vphi_{m-1}^m \circ \dots \circ \vphi_n^{n+1}$.
 \begin{equation*} \label{diag-Z}
\begin{tikzcd}[column sep=small]
A_{1} \arrow[r,  "\vphi_{1}^{2}"] 
&A_2\arrow[r,  "\vphi_{2}^{3}"] 
&\dots \arrow[r] 
&A_n \arrow[dr,  "\gamma"] \arrow[rr, "\vphi_n^m"] 
&& A_m \arrow[r] & \dots  \\
&&&& (\Z_{p,q}, \sigma) \arrow[ur, dashed, "\eta"] && 
\end{tikzcd}
\end{equation*}
\end{itemize}

The existence of such a sequence $(A_n,\vphi_n^m)$ in $\ZZ$ follows from the a fundamental theorem in Fra\"\i ss\'e theory which, roughly speaking, states that any category which has the ``joint embedding property", the ``near amalgamation property" and satisfies a ``separability assumption" (see Section \ref{pre} for the details) contains a sequence exhibiting the general behavior of $(A_n,\vphi_n^m)$, i.e. satisfying the conditions  $(*)$ and $(**)$ in the respective category. In a category $\KK$, whenever such sequences exist, a standard approximate intertwining argument guarantees that they all have isomorphic limits. Such sequences in $\KK$ are usually called ``Fra\"\i ss\'e sequences of $\KK$", revealing the origin of their mainstream study, and what usually makes them interesting is the uniqueness, universality and the (almost) homogeneity of their shared limit---the so called  ``Fra\"\i ss\'e limit of $\KK$".

In the category $\ZZ$, the Fra\"\i ss\'e limit is automatically the Jiang-Su algebra, along with its unique trace. This is because it is not difficult to see that the Fra\"\i ss\'e limit of $\ZZ$ (i.e. the inductive limit of any sequence in $\ZZ$ satisfying $(*)$ and $(**)$) is a simple and monotracial $C^*$-algebra (cf. \cite{Masumoto}) and the characterization of the Jiang-Su algebra as the unique unital simple and monotracial inductive limit of sequences of prime dimension-drop algebras (Theorem 6.2 of \cite{Jiang-Su}) implies that the limit has to be the Jiang-Su algebra. 
However, one does not need Fra\"\i ss\'e theory nor the mentioned characterization of $\Z$ to prove the existence of a sequence satisfying $(*)$ and $(**)$, or the fact that the limit of such a sequence is $\Z$. Instead, consider the originally constructed sequences intended to define $\Z$ in \cite[Proposition 2.5]{Jiang-Su}. The proposition states that certain sequences of prime dimension-drop algebras exist whose limits are simple and monotracial. Let us assume for a moment that we do not know whether all of these sequences have the same limit.
We will show that at the recursive stages of constructing such a sequence one can easily make sure that, with appropriately chosen traces, the constructed sequence would satisfy $(*)$ and $(**)$ as well as the conditions of \cite[Proposition 2.5]{Jiang-Su}. These sequences are described in Proposition \ref{my-Z-seq}. 
As pointed out earlier, then a standard approximate intertwining argument shows that these sequences have a unique limit up to $*$-isomorphism. This limit is the ``Jiang-Su algebra" since such sequences satisfy the conditions of the original construction.
 
The connection between strongly self-absorbing $C^*$-algebras and Fra\"\i ss\'e limits was suspected in \cite{Eagle} (see Problem 7.1). In pursuit of such connection,  in Section \ref{ssa-fl-sec} we show that if $\CC$ is a category of unital separable $C^*$-algebras and unital $*$-homomorphisms, which happens to have a Fra\"\i ss\'e limit $\D$ with approximately inner  half-flip, then $\D$ is strongly self-absorbing, if $\CC$ is, in a sense, cofinal (\emph{dominating}) in a category $\KK$ which contains $\CC$ (both the objects and morphisms) and all the objects of the form $\A \otimes \A$,  for $\A$ an object of $\CC$, and a ``modest but sufficient" set of new morphisms; we call such $\KK$ a $\otimes$-expansion of $\CC$ (Definition \ref{tensor expansion}). 
 \begin{theorem}\label{thm-F-ssa-intro}
Suppose $\CC$ is a category of unital separable $C^*$-algebras and unital $*$-homomorphisms and $\CC$  dominates a $\otimes$-expansion of itself. If $\mathfrak C$ has a Fra\"\i ss\'e limit $\D$ which has approximately inner  half-flip, then $\D$ is strongly self-absorbing.
\end{theorem}
 The notion of a category dominating a larger category (Definition \ref{dom-def-1}) was introduced and used in the context of Fra\"\i ss\'e categories by W. Kubi\'s (cf. \cite{Kubis-FS}). When a category $\CC$ dominates a larger category $\KK$, then any Fra\"\i ss\'e sequence of $\CC$ (whenever it exists) is also a Fra\"\i ss\'e sequence of $\KK$. This is very helpful, since often it is easier to show that the smaller category $\CC$ has a Fra\"\i ss\'e sequence and dominates $\KK$ than showing directly that $\KK$ has a Fra\"\i ss\'e sequence. In the proof of Theorem \ref{thm-F-ssa-intro} we use a weaker version of the notion of Fra\"\i ss\'e sequences, called the ``weak Fra\"\i ss\'e sequences" (Definition \ref{def-wF-seq}). Weak Fra\"\i ss\'e sequences are studied in \cite{Kubis-weak} and they also have isomorphic limits, called the ``weak Fra\"\i ss\'e limit". An outline of the proof of Theorem \ref{thm-F-ssa-intro} goes as follows. Suppose a category $\CC$ of unital separable $C^*$-algebras and unital $*$-homomorphisms dominates a category $\KK$ which is a $\otimes$-expansion of $\CC$, and $\D$ is the limit of a Fra\"\i ss\'e sequence $(\D_n, \vphi_n^m)$ in $\CC$. Since $\KK$ is dominated by $\CC$, we know that $(\D_n, \vphi_n^m)$ is also a Fra\"\i ss\'e sequence of $\KK$. The fact that  $\KK$ is a $\otimes$-expansion of $\CC$ guarantees that the sequence $(\D_n\otimes \D_n, \vphi_n^m \otimes \vphi_n^m)$ is a $\KK$-sequence, whose limit is clearly $\D\otimes \D$. Then we will show that $(\D_n\otimes \D_n, \vphi_n^m \otimes \vphi_n^m)$ is a weak Fra\"\i ss\'e sequence of $\KK$. Since $(\D_n, \vphi_n^m)$ is also a  weak Fra\"\i ss\'e sequence of $\KK$, by the uniqueness of the weak Fra\"\i ss\'e limit, $\D$ is self-absorbing. Then we use an easy version of the homogeneity property of the weak Fra\"\i ss\'e limit to show that $\D$ is in fact strongly self-absorbing (the author does not know whether $(\D_n\otimes \D_n, \vphi_n^m \otimes \vphi_n^m)$ is a Fra\"\i ss\'e sequence of $\KK$, however it is a ``weak" one). 

The last two sections are devoted to give a proof of the fact that $\Z$ is strongly self-absorbing, using Theorem \ref{thm-F-ssa-intro}. 
In Section \ref{sec-Z is fl} we directly show that $\Z$ is the Fra\"\i ss\'e limit of the category $\ZZ$. Finally, in the last section we define a $\otimes$-expansion $\mathfrak T$ of $\ZZ$ which is dominated by $\ZZ$.  Then Theorem \ref{thm-F-ssa-intro} implies that $\Z$ is strongly self-absorbing, since $\Z$ has approximately inner  half-flip. 

\null

{\bf Acknowledgment.} I would like to thank Ilijas Farah, Marzieh Forough and Alessandro Vignati for the helpful conversations and the anonymous referee for the suggestions and remarks.

\section{preliminaries}\label{pre}
Suppose $\KK$ is a category.
 We refer to the objects and morphisms of $\KK$ by $\KK$-objects and $\KK$-morphisms, respectively, and sometimes write $\A\in \KK$ if $\A$ is a $\KK$-object. A $\KK$-sequence $(\A_n, \vphi_n^{n+1})$ is a sequence of $\KK$-objects  $\A_n$ and $\KK$-morphisms $\vphi_n^{n+1}: \A_n \to \A_{n+1}$, for every $n$. We often denote such a sequence by $(\A_n, \vphi_n^m)$, where $\vphi_n^m: \A_n \to \A_m$ is defined by $\vphi_n^m= \vphi_{m-1}^{m} \circ \dots \circ \vphi_n^{n+1}$ for every $m\geq n$ (let $\vphi_n^n = \id_{\A_n}$). 
 By \emph{limit} we mean the inductive limit (also called \emph{colimit}). In categories of $C^*$-algebras and $*$-homomorphisms or more generally categories of Banach spaces and norm-decreasing (1-Lipschitz) linear maps, the limits of  sequences always exist (in a possibly larger category). If $\A$ is the limit of the $\KK$-sequence $(\A_n, \vphi_n^m)$,  let $\vphi_n^{\infty}: \A_n \to \A$ denote the induced inductive limit morphism (which may not be $\KK$-morphisms).

{\bf Notations.} Suppose $\A$ and $\B$ are normed structures. For $i=0,1$ and morphisms $\vphi_i: \A \to \B$ and $F\subseteq \A$ we sometimes write $\vphi_0 \approx_{\epsilon, F} \vphi_1$ if and only if of $\|\vphi_0(a) - \vphi_1(a)\|<\epsilon$, for every $a\in F$. We denote the image of the set $F$ under a morphism $\vphi$ by $\vphi[F]$. We also write $F\Subset \A$ if $F$ is a finite subset of  $\A$.
\subsection{Fra\"\i ss\'e sequences and the uniqueness}
For the rest of this section $\CC$ and $\KK$ are always categories of separable $C^*$-algebras and $*$-embeddings (injective $*$-homomorphisms).  Note that the following notions can be defined more generally for any metric category with 1-Lipschitz morphisms, cf. \cite{Kubis-ME}. Let us start with the main definition. 
 \begin{definition} \label{Fraisse-seq-def}
 A $\KK$-sequence $(\D_n, \vphi_n^m)$ is called a \emph{Fra\"\i ss\'e sequence} of $\KK$ if it satisfies,
\begin{itemize}
    \item[($\mathscr U$)] for every $\A\in \KK$ there exists a $\KK$-morphism $\vphi: \A \to \D_n$, for some $n\in \mathbb N$,
    \item[($\mathscr A$)] for every $\epsilon>0$, $n\in \mathbb N$,  $F\Subset \D_n$ and for every $\KK$-morphism $\gamma: \D_n \to \B$ there are $m\geq n$ and a $\KK$-morphism $\eta: \B \to \D_m$ such that
    $ \vphi_n^m \approx_{\epsilon, F} \eta \circ \gamma$.
\end{itemize}
 \end{definition}

A standard approximate intertwining argument (more generally known as ``approximate back-and-forth", in model theory) shows that two Fra\"\i ss\'e sequences must have $*$-isomorphic limits. 

 \begin{theorem} \label{F-unique}
The  Fra\"\i ss\'e sequences of $\KK$ (whenever they exist)  have  $*$-isomorphic limits. Moreover, if $(\D_n, \vphi_n^m)$ and $(\E_n, \psi_n^m)$ are both Fra\"\i ss\'e sequences of $\KK$ with limits $\D$ and $\E$, respectively, and $\theta: \D_k \to \E_\ell$ is a $\KK$-morphism, then for every $\epsilon>0$ and $F\Subset \D_k$ there is a $*$-isomorphism $\Phi: \D \to \E$ such that the diagram
 \begin{equation*}  
\begin{tikzcd}
\D_k \arrow[r,  "\vphi_k^\infty"] \arrow[d,  "\theta"]  &   \D \arrow[d, "\Phi"] \\ 
\E_\ell \arrow[r, "\psi_\ell^\infty"]  & \E 
\end{tikzcd}
\end{equation*}
$\epsilon$-commutes on $F$, i.e. $ \psi_k^\infty \circ \theta \approx_{\epsilon,F} \Phi \circ \vphi_\ell^\infty$.
\end{theorem}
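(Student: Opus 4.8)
The plan is to run the classical Elliott approximate-intertwining argument, with the usual care needed to track the finite sets and tolerances so that one obtains not merely an isomorphism but an isomorphism that $\epsilon$-commutes with a prescribed initial morphism $\theta\colon\D_k\to\E_\ell$. I would first set up bookkeeping: fix a countable dense subset of each $\D_n$ and $\E_n$, and fix a summable sequence of positive reals $(\epsilon_i)$ with $\sum_i\epsilon_i<\epsilon$ and with $\epsilon_0$ chosen so that the first back-and-forth step already handles the finite set $F$ to within $\epsilon_0$. The goal is to build, by alternately invoking property $(\mathscr A)$ of the two Fra\"\i ss\'e sequences, an increasing zig-zag of $\KK$-morphisms
\begin{equation*}
\begin{tikzcd}[column sep=small]
\D_{k} \arrow[r] \arrow[d,"\theta"] & \D_{k_1} \arrow[r] & \D_{k_2} \arrow[r] & \cdots \\
\E_{\ell} \arrow[ur] \arrow[r] & \E_{\ell_1} \arrow[ur] \arrow[r] & \E_{\ell_2} \arrow[r] & \cdots
\end{tikzcd}
\end{equation*}
in which each newly added diagonal arrow, composed with the connecting maps of the relevant sequence, approximates the relevant connecting map to within the next $\epsilon_i$ on the next finite set in the enumeration. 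The very first diagonal arrow going from $\D_k$ to $\E_\ell$ will be $\theta$ itself; this is what forces the final isomorphism to be compatible with $\theta$.

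Concretely, the inductive step is: given a $\KK$-morphism $\alpha\colon\D_{k_i}\to\E_{\ell_i}$ already constructed, with $F_i\Subset\D_{k_i}$ the current finite set and $\epsilon_i$ the current tolerance, apply property $(\mathscr A)$ of the sequence $(\E_n,\psi_n^m)$ — with $\B=\E_{\ell_i}$ playing the target and $\gamma$ the composite $\alpha\circ\vphi_{k_{i-1}}^{k_i}$, or rather $\gamma=\psi^{\ell_i}_{\ell_{i-1}}$ appropriately — to obtain $\ell_{i+1}\ge\ell_i$ and a $\KK$-morphism $\beta\colon\E_{\ell_i}\to\D_{k_{i+1}}$ (after bumping $k_i$ up to some $k_{i+1}$) so that $\beta\circ\alpha\approx_{\epsilon_i,F_i}\vphi^{k_{i+1}}_{k_i}$; then symmetrically apply $(\mathscr A)$ of $(\D_n,\vphi_n^m)$ to $\beta$ with a finite set large enough to contain the images of the next enumerated element, producing the next right-going diagonal. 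The standard consequence is that both "staircases" of diagonal maps converge in the point-norm topology to maps $\Phi\colon\D\to\E$ and $\Psi\colon\E\to\D$ on the dense sets, hence extend to $*$-homomorphisms, and the approximate commutation forces $\Phi\circ\Psi=\id_\E$ and $\Psi\circ\Phi=\id_\D$, so $\Phi$ is a $*$-isomorphism. The choice $\epsilon_0$-step with the initial arrow $\theta$ gives exactly $\psi^\infty_\ell\circ\theta\approx_{\epsilon,F}\Phi\circ\vphi^\infty_k$; the first sentence of the theorem is the special case where one starts from any morphism whatsoever (property $(\mathscr U)$ guarantees one exists).

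I expect the only real subtlety — not a deep obstacle, but the place where care is required — to be the indexing: property $(\mathscr A)$ only lets one push a $\KK$-morphism $\gamma\colon\D_n\to\B$ back into \emph{some} later stage $\D_m$, so at each step one must re-coordinate the two index sequences $k_i$ and $\ell_i$ and make sure the finite sets $F_i$ grow to exhaust dense subsets of the \emph{limit} (pulled back along the finite-stage maps), while the tolerances $\epsilon_i$ stay summable; one also has to note that $\KK$-morphisms between $C^*$-algebras are contractive (they are $*$-homomorphisms), which is what makes the point-norm Cauchy estimates go through when composing long strings of connecting maps. None of this requires anything beyond the definition of a Fra\"\i ss\'e sequence and the completeness of the limit $C^*$-algebras, so the argument is entirely elementary; I would simply remark that it is the word-for-word $C^*$-analogue of Fra\"\i ss\'e's back-and-forth and cite the standard references for the routine verifications.
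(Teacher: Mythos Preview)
Your proposal is correct and follows essentially the same approximate-intertwining argument as the paper: start the zig-zag with $\theta$ as the first diagonal map, alternately invoke $(\mathscr A)$ for the two sequences against growing finite sets and summable tolerances, and pass to the limit; the paper likewise derives the first assertion by using $(\mathscr U)$ to manufacture an initial $\theta$. One small slip: in your inductive step you write ``apply $(\mathscr A)$ of the sequence $(\E_n,\psi_n^m)$'' to produce $\beta\colon\E_{\ell_i}\to\D_{k_{i+1}}$, but that map comes from $(\mathscr A)$ for $(\D_n,\vphi_n^m)$ applied to $\alpha\colon\D_{k_i}\to\E_{\ell_i}$ --- the roles are swapped in your description, though the intended construction is clear.
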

\begin{proof}
Suppose $(\D_n, \vphi_n^m)$ and $(\E_n, \psi_n^m)$ are both Fra\"\i ss\'e sequences of $\KK$ with respective limits $\D$ and $\E$.
 Recursively construct sequences $(m_i)_{i\in\mathbb N}$, $(n_i)_{i\in\mathbb N}$ of natural numbers, finite sets
$F'_i\Subset  \D_{n_i}$, $G'_i\Subset \E_{m_i}$ and $\KK$-morphisms $\gamma_i: \D_{n_i} \to \E_{m_i}$ and $\eta_i: \E_{m_i} \to \D_{n_{i+1}}$ such that for every $i$,
\begin{itemize}
    \item $\D= \overline{\bigcup_i \vphi_{n_i}^{\infty}[F'_i]}$ and $\E= \overline{\bigcup_i \psi_{m_i}^{\infty}[G'_i]}$, 
    \item $\vphi_{n_i}^{n_{i+1}}[F'_i]\subseteq F'_{i+1}$ and $\psi_{m_i}^{m_{i+1}}[G'_i]\subseteq G'_{i+1}$,
    \item $\gamma_i[F'_i] \subseteq G'_{i}$ and $\eta_i[G'_i] \subseteq F'_{i+1}$,
    \item $\eta_i \circ \gamma_i \approx_{\frac{1}{2^{i-1}}, F'_i} \vphi_{n_i}^{n_{i+1}}$ and $\gamma_{i+1} \circ \eta_i \approx_{\frac{1}{2^i}, G'_i} \psi_{m_i}^{m_{i+1}}$.
\end{itemize}
This guarantees the existence of a $*$-isomorphism $\Phi: \D \to \E$ (cf. \cite[Proposition 2.3.2]{Rordam-Stormer}). 
 \begin{equation*}  
\begin{tikzcd}
\D_{n_1} \arrow[rr,  "\vphi_{n_1}^{n_2}"] \arrow[dr,  "\gamma_1"] & & \D_{n_2} \arrow[rr,  "\vphi_{n_2}^{n_3}"] \arrow[dr,  "\gamma_2"] && \D_{n_3} \arrow[r] & \dots & \D  \arrow[d, "\Phi"] \\ 
& \E_{m_1} \arrow[rr, "\psi_{m_1}^{m_2}"] \arrow[ur, "\eta_1"]  & & \E_{m_2} \arrow[rr, "\psi_{m_2}^{m_3}"] \arrow[ur, "\eta_2"] & & \dots & \E
\end{tikzcd}
\end{equation*}
To start, let $\epsilon_n=2^{-n}$, fix $F_n\Subset \D_n$ and $G_n\Subset \E_n$ such that 
$$\D= \overline{\bigcup_n \vphi_n^{\infty}[F_n]} \qquad \text{and} \qquad \E= \overline{\bigcup_n \psi_n^{\infty}[G_n]}.$$ 
Let $n_1 = 1$ and $F'_1 = F_1$. Using the condition ($\mathscr U$) for the Fra\"\i ss\'e sequence $(\E_n, \psi_n^m)$, there are $m_1$ and  $\gamma_1: \D_{n_1} \to \E_{m_1}$ in $\KK$. By the condition ($\mathscr A$) for the Fra\"\i ss\'e sequence $(\D_n, \vphi_n^m)$, find $\eta_1: \E_{m_1} \to \D_{n_2}$ in $\KK$, for some $n_2>n_1$,  such that
$$
\eta_1 \circ \gamma_1 \approx_{\epsilon_1, F'_{1}} \vphi_{n_1}^{n_2}.
$$
Let $G'_1=\gamma_1[F'_1] \cup G_{m_1}.$
Similarly, using the condition ($\mathscr A$) for $(\E_n, \psi_n^m)$, find $m_2>m_1$ and  $\gamma_2: \D_{n_2} \to \E_{m_2}$ such that 
$$
\gamma_2 \circ \eta_1 \approx_{\epsilon_2, G'_1} \psi_{m_1}^{m_2}.
$$
Let $F'_2= \eta_1[G'_1]\cup F_{n_2}\cup \vphi_{n_1}^{n_2}[F'_1]$. Again,  find $\eta_2:  \E_{m_2} \to \D_{n_3}$, for some $n_3>n_2$ such that 
$$
\eta_2 \circ \gamma_2 \approx_{\epsilon_3, F'_2} \vphi_{n_2}^{n_3}.
$$
Let  $G'_2=  \gamma_2[F'_2]\cup G_{m_2}\cup \psi_{m_1}^{m_2}[G'_1]$.
Continuing this process gives us the required approximate intertwining. For the second statement, in the proof above let $n_1= k$, $m_1=\ell$, $F_1 = F$, $\gamma_1=\theta$ and pick $\epsilon_i$ so that $\sum_{i=1}^\infty \epsilon_i <\epsilon$.
\end{proof}
The unique limit of Fra\"\i ss\'e sequences of $\KK$ is called the \emph{Fra\"\i ss\'e limit} of $\KK$.
The following notion was introduced in \cite{Kubis-FS} for abstract categories.
\begin{definition}\label{dom-def-1}
A category $\mathfrak C$ \emph{dominates}  $\KK$ if $\mathfrak C$ is a subcategory of $\KK$ and for any $\epsilon >0$ the following conditions hold:
 \begin{itemize}
     \item[($\mathscr C$)]  for every $\A\in \KK$ there are $\C\in \mathfrak C$ and $\vphi: \A \to \C$ in $\KK$,  i.e. $\mathfrak C$ is cofinal in $\KK$,
     \item[($\mathscr D$)] for every $\vphi: \A \to \B$  in $\KK$  with $\A\in \mathfrak C$ and for every $F\Subset \A$, there exist  $\beta: \B \to \C$  in $\KK$ with $\C \in \mathfrak C$ and a $\mathfrak C$-morphism $\alpha: \A \to \C$ such that 
     $\alpha \approx_{\epsilon, F} \beta \circ \vphi.$
      \begin{equation*}  
\begin{tikzcd}
\A \arrow[r,  "\vphi"] \arrow[dr, dashed, "\alpha"]  & \B \arrow[d, dashed, "\beta"] \\ 
& \C 
\end{tikzcd}
\end{equation*}
 \end{itemize}
\end{definition}

\subsection{The existence of Fra\"\i ss\'e sequences} \label{F-cat-sec} In this section we define the notion of Fra\"\i ss\'e categories (only) for categories of separable $C^*$-algebras and $*$-embeddings. These categories are guaranteed to contain Fra\"\i ss\'e sequences. 
\begin{definition}\label{Fraisse-cat-def}
Suppose $\CC$ is a category. We say that
\begin{itemize}[leftmargin=*]
\item$\CC$ has \emph{the joint embedding property} (also sometimes called ``directed", which is a more appropriate terminology in this setting) if for $\A, \B \in \CC$ there are $\C\in \CC$ and $\CC$-morphisms $\vphi:\A \to \C$ and $\psi:\B \to \C$. 
\item $\CC$ has \emph{the near amalgamation property} if for every $\epsilon>0$, $\A\in \CC$, $F\Subset \A$ and $\CC$-morphisms $\vphi : \A \to \B$, $\psi: \A \to \C$ there are $\D\in \CC$ and $\CC$-morphisms $\vphi': \B \to \D$ and $\psi' : \C \to \D$ such that $\|\vphi'\circ \vphi(a) , \psi' \circ \psi(a)\|<\epsilon$ for every $a\in F$
 \item $\CC$ is \emph{separable} if $\CC$ is dominated by a countable subcategory (a subcategory with countably many objects and morphisms).
\end{itemize}
A category is called a \emph{Fra\"\i ss\'e category} if it has the joint embedding property, the near amalgamation property and it is separable. 
\end{definition} 
It is well known that Fra\"\i ss\'e categories have Fra\"\i ss\'e sequences and their Fra\"\i ss\'e limits are unique, universal and almost homogeneous in the respective categories. To see a proof of the next theorem, see \cite{Kubis-FS}. Although in the statements and definitions of \cite{Kubis-FS} there are no finite sets $F$ and $\epsilon$ is always $0$, since the objects of our categories are separable, a routine adjustment of each proof immediately implies the corresponding statement.
In fact, the proofs of universality and almost homogeneity follow an approximate intertwining argument.

\begin{theorem}\label{uni-hom}
Assume $\CC$ is a Fra\"\i ss\'e category. Then $\CC$ has a Fra\"\i ss\'e sequence. If $(\D_n, \vphi_n^m)$ is a Fra\"\i ss\'e sequence of $\CC$ and $\D= \varinjlim (\D_n, \vphi_n^m)$ is the unique Fra\"\i ss\'e limit of $\CC$, then 
\begin{itemize}[leftmargin=*]
\item $\D$ is universal: if  $\B$ is the limit of a $\CC$-sequence, then there is a  $*$-embedding $\vphi: \B \to \D$, and
\item $\D$ is almost homogeneous: for every $\epsilon>0$, $\CC$-objects $\A,\B\subseteq \D$,  every $F\Subset \A$ and $*$-isomorphism $\vphi: \A \to \B$ in $\CC$, there is a $*$-automorphism $\theta: \D \to \D$ such that $\theta  \approx_{\epsilon, F} \vphi$. 
\end{itemize}

\end{theorem}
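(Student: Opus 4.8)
The plan is to follow the category-theoretic construction of Fra\"\i ss\'e limits due to Kubi\'s \cite{Kubis-FS}, inserting finite sets and $\epsilon$'s throughout as the paragraph after the statement suggests; the three assertions come, respectively, from a bookkeeping construction, a one-sided approximate intertwining, and the two-sided intertwining already packaged in Theorem \ref{F-unique}. For \emph{existence}, first use separability to fix a countable subcategory $\CC_0$ dominating $\CC$. It suffices to build a $\CC_0$-sequence $(\D_n,\vphi_n^m)$ satisfying $(\mathscr U)$ and $(\mathscr A)$ with respect to $\CC_0$: conditions $(\mathscr C)$ and $(\mathscr D)$ of domination then upgrade it to a Fra\"\i ss\'e sequence of $\CC$, since every $\CC$-object maps into some $\CC_0$-object and every amalgamation request coming from a $\CC$-morphism out of $\D_n$ is reduced, up to $\epsilon$ on the prescribed finite set, to one coming from a $\CC_0$-morphism. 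Since $\CC_0$ is countable, enumerate its objects, its morphisms, and (using separability of each $\D_n$) for each $n$ a countable family of pairs $(F,\epsilon)$ cofinal in the finite subsets of $\D_n$ and positive rationals; then construct $(\D_n,\vphi_n^m)$ recursively, at each stage either attaching the next enumerated $\CC_0$-object via the joint embedding property (this yields $(\mathscr U)$) or discharging the next pending task $(\gamma\colon\D_n\to\B,\ F\Subset\D_n,\ \epsilon)$ by applying the near amalgamation property and then composing along the already-built part of the sequence (this yields $(\mathscr A)$), with a diagonal bookkeeping that revisits every task infinitely often.

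For \emph{universality}, let $\B=\varinjlim(\B_i,\psi_i^m)$ be the limit of a $\CC$-sequence; fix $G_i\Subset\B_i$ with $\B=\overline{\bigcup_i\psi_i^\infty[G_i]}$ and $\epsilon_i>0$ with $\sum_i\epsilon_i<\infty$. Recursively produce $n_1<n_2<\cdots$ and $\CC$-morphisms $\gamma_i\colon\B_i\to\D_{n_i}$ with $\gamma_{i+1}\circ\psi_i^{i+1}\approx_{\epsilon_i,G_i}\vphi_{n_i}^{n_{i+1}}\circ\gamma_i$: take $\gamma_1$ from $(\mathscr U)$; given $\gamma_i$, apply the near amalgamation property to $\B_i$ with the two $\CC$-morphisms $\psi_i^{i+1}$ and $\gamma_i$ to get $\C\in\CC$ and $\CC$-morphisms $u\colon\B_{i+1}\to\C$, $v\colon\D_{n_i}\to\C$ with $u\circ\psi_i^{i+1}\approx v\circ\gamma_i$ on $G_i$, then apply $(\mathscr A)$ to $v$ to get $n_{i+1}>n_i$ and $\eta\colon\C\to\D_{n_{i+1}}$ with $\eta\circ v\approx\vphi_{n_i}^{n_{i+1}}$ on $\gamma_i[G_i]$, and set $\gamma_{i+1}=\eta\circ u$. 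The resulting maps commute up to a summable error, hence pass to the limit to give a $*$-homomorphism $\vphi\colon\B\to\D$. If moreover every $\CC$-morphism is injective, then each $\gamma_i$ (an injective $*$-homomorphism of $C^*$-algebras, hence isometric) and each $\vphi_{n_i}^\infty$ are isometric, so $\vphi$ is isometric on the dense set $\bigcup_i\psi_i^\infty[\B_i]$ and thus isometric; this gives the final assertion.

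For \emph{almost homogeneity}, given $\CC$-objects $\A,\B\subseteq\D$, a $*$-isomorphism $\vphi\colon\A\to\B$ in $\CC$, $F\Subset\A$ and $\epsilon>0$, the plan is to run the back-and-forth of Theorem \ref{F-unique} with \emph{both} sequences taken to be $(\D_n,\vphi_n^m)$ and with $\vphi$ (composed with the fixed embeddings of $\A$ and $\B$ into terms of the sequence) as the seed of the induction; this produces in the limit a single $*$-automorphism $\theta\colon\D\to\D$ which, up to $\epsilon$ on $F$, is the ambient extension of $\vphi$ --- this is precisely the ``moreover'' clause of Theorem \ref{F-unique} applied with $\E=\D$ and $\psi_n^m=\vphi_n^m$. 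The only preparatory point is to reduce to the situation where $\A$ and $\B$ appear as terms $\D_k,\D_\ell$ of the sequence with the given embeddings equal to $\vphi_k^\infty,\vphi_\ell^\infty$; this is arranged by a short intertwining using $(\mathscr U)$ and $(\mathscr A)$, absorbing the extra error into $\epsilon$.

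The step I expect to be the main obstacle is the bookkeeping in the existence proof: organizing a \emph{single countable} list of amalgamation tasks that nevertheless forces $(\mathscr A)$ for \emph{every} $\CC$-morphism out of \emph{every} $\D_n$ on \emph{every} finite subset is exactly where domination by a countable subcategory is used essentially, and getting the diagonalization to terminate with a sequence meeting all requirements is the one genuinely delicate verification. The analogous ``reduction to terms of the sequence'' appearing in almost homogeneity is a minor variant of the same intertwining technique and should be routine once the existence argument is in place.
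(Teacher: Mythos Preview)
Your proposal is correct and is exactly the approach the paper indicates: the paper does not give a self-contained proof of this theorem but simply refers to \cite{Kubis-FS} and remarks that ``since the objects of our categories are separable, a routine adjustment of each proof immediately implies the corresponding statement'' and that ``the proofs of universality and almost homogeneity follow an approximate intertwining argument.'' Your sketch---bookkeeping over a countable dominating subcategory for existence, a one-sided intertwining via NAP plus $(\mathscr A)$ for universality, and invoking the ``moreover'' clause of Theorem~\ref{F-unique} for almost homogeneity---is precisely that routine adjustment made explicit, so you are in full agreement with the paper (indeed, more detailed than it).
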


\begin{proposition}\label{Fraisse-dominate}
Suppose a category $\mathfrak C$   dominates $\KK$ and  $(\D_n, \vphi_n^m)$ is a  Fra\"\i ss\'e sequence of $\mathfrak C$, then 
 $(\D_n, \vphi_n^m)$ is also a Fra\"\i ss\'e sequence of $\KK$.
\end{proposition}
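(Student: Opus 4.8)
The plan is to verify the two defining conditions of a Fra\"\i ss\'e sequence, namely ($\mathscr U$) and ($\mathscr A$), for $(\D_n, \vphi_n^m)$ when regarded as a $\KK$-sequence, using the fact that it is a Fra\"\i ss\'e sequence of the dominating subcategory $\CC$ together with conditions ($\mathscr C$) and ($\mathscr D$) from Definition \ref{dom-def-1}. Throughout, note that every $\CC$-morphism is a $\KK$-morphism since $\CC$ is a subcategory of $\KK$, so in particular each $\vphi_n^m$ is a $\KK$-morphism and $(\D_n, \vphi_n^m)$ is genuinely a $\KK$-sequence.

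For ($\mathscr U$) in $\KK$: given any $\A \in \KK$, first apply cofinality ($\mathscr C$) to obtain $\C \in \CC$ and a $\KK$-morphism $\vphi: \A \to \C$. Since $(\D_n, \vphi_n^m)$ satisfies ($\mathscr U$) as a Fra\"\i ss\'e sequence of $\CC$, there are $n$ and a $\CC$-morphism (hence $\KK$-morphism) $\psi: \C \to \D_n$. Then $\psi \circ \vphi: \A \to \D_n$ is a $\KK$-morphism, as required. (Here no $\epsilon$ is needed; if one prefers to stay within the $\epsilon$-version one simply takes $\epsilon$ arbitrary in ($\mathscr C$).)

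For ($\mathscr A$) in $\KK$: fix $\epsilon > 0$, $n \in \mathbb N$, $F \Subset \D_n$, and a $\KK$-morphism $\gamma: \D_n \to \B$. Since $\D_n \in \CC$, apply ($\mathscr D$) with parameter $\epsilon/2$ to the $\KK$-morphism $\gamma$ and the finite set $F$: this yields $\C \in \CC$, a $\KK$-morphism $\beta: \B \to \C$, and a $\CC$-morphism $\alpha: \D_n \to \C$ with $\alpha \approx_{\epsilon/2, F} \beta \circ \gamma$. Now $\alpha$ is a $\CC$-morphism out of $\D_n$, so apply the amalgamation condition ($\mathscr A$) of the Fra\"\i ss\'e sequence $(\D_n, \vphi_n^m)$ in $\CC$, with tolerance $\epsilon/2$ and finite set $F$, to $\alpha$: there are $m \geq n$ and a $\CC$-morphism $\delta: \C \to \D_m$ with $\vphi_n^m \approx_{\epsilon/2, F} \delta \circ \alpha$. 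Setting $\eta = \delta \circ \beta: \B \to \D_m$, which is a $\KK$-morphism, a triangle-inequality estimate on $F$ gives $\|\eta \circ \gamma(a) - \vphi_n^m(a)\| \leq \|\delta(\beta(\gamma(a))) - \delta(\alpha(a))\| + \|\delta(\alpha(a)) - \vphi_n^m(a)\| < \epsilon/2 + \epsilon/2 = \epsilon$ for all $a \in F$, using that $\delta$ is contractive. This establishes ($\mathscr A$) for $\KK$.

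The argument is essentially a bookkeeping exercise, so I do not expect a serious obstacle; the one point requiring a little care is the order in which the two conditions are applied in the proof of ($\mathscr A$) — one must first ``correct'' $\gamma$ to a $\CC$-morphism $\alpha$ via domination, and only then feed it into the $\CC$-amalgamation of the Fra\"\i ss\'e sequence — together with tracking that the resulting composite $\eta = \delta \circ \beta$ lands back in the sequence and that the two $\epsilon/2$ errors, one of which is pushed forward through the contractive $\CC$-morphism $\delta$, add up to at most $\epsilon$ on $F$.
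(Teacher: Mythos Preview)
Your proof is correct and follows essentially the same approach as the paper: verify ($\mathscr U$) by composing cofinality ($\mathscr C$) with ($\mathscr U$) in $\CC$, and verify ($\mathscr A$) by first applying ($\mathscr D$) with tolerance $\epsilon/2$ to replace $\gamma$ by a $\CC$-morphism $\alpha$, then applying ($\mathscr A$) in $\CC$ with tolerance $\epsilon/2$ to $\alpha$, and composing. The only cosmetic difference is that you explicitly invoke contractivity of $\delta$ to push the first error through, which the paper leaves implicit.
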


\begin{proof}
 The fact that $(\D_n, \vphi_n^m)$ satisfies the condition ($\mathscr U$) in $\KK$ follows from ($\mathscr C$) and the fact that  $(\D_n, \vphi_n^m)$ satisfies ($\mathscr U$) in $\CC$. For the condition ($\mathscr A$), suppose $\epsilon>0$, $F\Subset \D_n$ and a $\KK$-morphism $\gamma:\D_n \to \B$ are given. By ($\mathscr D$) find $\C\in \CC$, a $\mathfrak K$-morphism $\beta: \B \to \C$ and a $\CC$-morphism $\alpha: \D_n \to \C$ such that  $\beta \circ \gamma\approx_{\epsilon/2,F} \alpha$. Since $(\D_n, \vphi_n^m)$ is a Fra\"\i ss\'e sequence  of $\mathfrak C$, there is a $\CC$-morphism $\eta':\C \to \D_m$ in $\CC$, for some $m> n$, such that
$\vphi_n^m  \approx_{\epsilon/2,F} \eta' \circ \alpha$. 
\begin{equation*}
\begin{tikzcd}
\D_n \arrow[rr, "\vphi_n^m"] \arrow[dr, "\gamma"] \arrow[ddr,  "\alpha"]  & &  \D_m  \\ 
& \B \arrow[d, "\beta"] \\
& \C \arrow[uur, dashed, ',  "\eta'"]
\end{tikzcd}
\end{equation*}
Then we have $\vphi_n^m  \approx_{\epsilon,F} \eta \circ \gamma$, where $\eta$ is the $\KK$-morphism defined by $\eta = \eta' \circ \beta$. Therefore $(\D_n, \vphi_n^m)$ a Fra\"\i ss\'e sequence of $\KK$. 
\end{proof}
\begin{remark}
 The Fra\"\i ss\'e theory was introduced by R. Fra\"\i ss\'e \cite{Fraisse} to study the correspondence between countable  first-order homogeneous structures and properties of the classes of their finitely generated substructures.
It was extended to metric structures by  Ben Yaacov \cite{Ben Yaacov} in continuous model theory, and by  Kubi\'s \cite{Kubis-ME} in the framework of (metric-enriched) categories. The category-theoretical approach was recently used in \cite{Ghasemi-Kubis} to study the AF-algebras that are Fra\"\i ss\'e limits of categories of finite-dimensional $C^*$-algebras.
\end{remark}

\subsection{Weak Fra\"\i ss\'e sequences and the uniqueness} \label{def-wF-seq}
A $\KK$-sequence $(\D_n, \vphi_n^m)$ is called a \emph{weak Fra\"\i ss\'e sequence} of $\KK$ if it satisfies,
\begin{itemize}
    \item[($\mathscr U$)] for every $\A\in \KK$ there exists a $\KK$-morphism $\vphi: \A \to \D_n$, for some $n$,
    \item[($\mathscr {WA}$)] for every $\epsilon>0$, $n\in \mathbb N$,  $F\Subset \D_n$ there is a natural number $m\geq n$ such that  for every $\gamma: \D_m \to \B$ in $\KK$, there are $k> m$ and $\eta: \B \to \D_k$ in $\KK$ such that
    $ \vphi_n^k \approx_{\epsilon, F} \eta \circ \gamma \circ \vphi_n^m$.
\end{itemize}
The limit of a weak Fra\"\i ss\'e sequence of $\KK$ is called \emph{weak Fra\"\i ss\'e limit} of $\KK$. The following is proved in \cite[Lemma 4.9]{Kubis-weak} for abstract categories.

 \begin{theorem} \label{w-F-unique}
   (uniqueness) A weak Fra\"\i ss\'e limit of $\KK$, whenever it exists, is unique up to $*$-isomorphism.
   
   Suppose $(\D_i, \vphi_i^j)$ and $(\E_i, \psi_i^j)$ are both weak Fra\"\i ss\'e sequences of $\KK$ with respective limits $\D$ and $\E$. For every $\epsilon>0$, $n\in \mathbb N$ and $F\Subset \D_n$ there is $k\geq n$ such that for every $\theta: \D_k \to \E_\ell$ in $\KK$ there is a $*$-isomorphism $\Phi: \D \to \E$ such that the diagram
 \begin{equation*}  
\begin{tikzcd}
\D_n \arrow[r, "\vphi_n^k"] &\D_k \arrow[r,  "\vphi_k^\infty"] \arrow[d,  "\theta"]  &   \D \arrow[d, "\Phi"] \\ 
 & \E_\ell \arrow[r, "\psi_\ell^\infty"]  & \E 
\end{tikzcd}
\end{equation*}
$\epsilon$-commutes on $F$, i.e. $\psi_\ell^\infty \circ \theta \circ \vphi_n^k \approx_{\epsilon,F} \Phi \circ  \vphi_n^\infty$.
\end{theorem}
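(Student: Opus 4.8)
The plan is to imitate the approximate back-and-forth argument from the proof of Theorem~\ref{F-unique}, replacing each appeal to the amalgamation condition ($\mathscr A$) by a two-stage appeal to the weaker ($\mathscr{WA}$). Recall how ($\mathscr{WA}$) operates: applied to data $(\epsilon,n,F)$ with $F\Subset\D_n$ it first only \emph{produces} a threshold level $m\geq n$, and only afterwards, for \emph{any} $\KK$-morphism $\gamma$ out of $\D_m$, returns $k>m$ and a morphism $\eta$ back into the chain with $\vphi_n^{k}\approx_{\epsilon,F}\eta\circ\gamma\circ\vphi_n^{m}$ --- note the extra bonding map $\vphi_n^{m}$. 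So once one has climbed to the threshold level one does recover the conclusion of ($\mathscr A$), provided one agrees to absorb the bonding maps $\vphi_n^{m}$ (and their $\psi$-analogues on the $\E$-side) into telescoped subchains. Concretely, I would build recursively strictly increasing indices $n=a_1\le a_1^{*}<a_2\le a_2^{*}<\cdots$ and $\ell=b_1\le b_1^{*}<b_2\le b_2^{*}<\cdots$, $\KK$-morphisms $\gamma_i\colon\D_{a_i^{*}}\to\E_{b_i}$ and $\eta_i\colon\E_{b_i^{*}}\to\D_{a_{i+1}}$, and finite sets $F_i'\Subset\D_{a_i}$, $G_i'\Subset\E_{b_i}$, so that the subchains $(\D_{a_i^{*}})_i$ and $(\E_{b_i^{*}})_i$ --- equipped with the $\vphi$- and $\psi$-bonding maps --- together with the morphisms $\widetilde\gamma_i:=\psi_{b_i}^{b_i^{*}}\circ\gamma_i$ and $\widetilde\eta_i:=\vphi_{a_{i+1}}^{a_{i+1}^{*}}\circ\eta_i$ form an approximate intertwining of exactly the shape used in the proof of Theorem~\ref{F-unique}; then the standard gluing lemma \cite[Proposition 2.3.2]{Rordam-Stormer} produces the $*$-isomorphism $\Phi\colon\D\to\E$.

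The recursion alternates ($\mathscr{WA}$) for the $\E$-chain with ($\mathscr{WA}$) for the $\D$-chain, and the point to be disciplined about is to ``upgrade'' each freshly produced object to its own ($\mathscr{WA}$)-threshold \emph{before} feeding it on to the next application on the other chain. At the $i$-th stage one holds $\gamma_i\colon\D_{a_i^{*}}\to\E_{b_i}$, where $a_i^{*}$ is already the ($\mathscr{WA}$)-threshold of the $\D$-chain for data $(\,\cdot\,,a_i,F_i')$ fixed at the previous stage. (At the first stage $a_1=n$ and $a_1^{*}=:k$ is the threshold of the $\D$-chain for $(\epsilon/2,n,F)$ --- this is the $k$ in the statement --- after which the given $\theta\colon\D_k\to\E_\ell$ is taken for $\gamma_1$, with $b_1:=\ell$ and $F_1':=F$; for the bare uniqueness statement one instead starts from $a_1=1$ and uses ($\mathscr U$) for the $\E$-chain to produce $\gamma_1\colon\D_{a_1^{*}}\to\E_{b_1}$.) One then: (i) applies ($\mathscr{WA}$) for the $\E$-chain to $(\delta_i,b_i,G_i')$ with $G_i'$ enlarged to contain $\gamma_i[\vphi_{a_i}^{a_i^{*}}[F_i']]$, the image of the previously tracked $\E$-set, and enough of $\E_{b_i}$ to keep the $G$'s dense in $\E$ in the limit, and records the threshold $b_i^{*}$; (ii) feeds $\widetilde\gamma_i$ into the conclusion of the instance of ($\mathscr{WA}$) that produced $a_i^{*}$, obtaining $a_{i+1}>a_i^{*}$ and $\eta_i\colon\E_{b_i^{*}}\to\D_{a_{i+1}}$ with $\vphi_{a_i}^{a_{i+1}}\approx\eta_i\circ\widetilde\gamma_i\circ\vphi_{a_i}^{a_i^{*}}$ on $F_i'$; (iii) applies ($\mathscr{WA}$) for the $\D$-chain to $(\delta_i',a_{i+1},F_{i+1}')$ with $F_{i+1}'$ enlarged to contain $\eta_i[\psi_{b_i}^{b_i^{*}}[G_i']]$, the image $\vphi_{a_i}^{a_{i+1}}[F_i']$, and enough of $\D_{a_{i+1}}$, and records the threshold $a_{i+1}^{*}$; (iv) feeds $\widetilde\eta_i$ into the conclusion of the instance of ($\mathscr{WA}$) from step (i), obtaining $b_{i+1}>b_i^{*}$ and $\gamma_{i+1}\colon\D_{a_{i+1}^{*}}\to\E_{b_{i+1}}$ with $\psi_{b_i}^{b_{i+1}}\approx\gamma_{i+1}\circ\widetilde\eta_i\circ\psi_{b_i}^{b_i^{*}}$ on $G_i'$; and passes to stage $i+1$.

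Post-composing the commutations obtained in (ii) and (iv) with $\vphi_{a_{i+1}}^{a_{i+1}^{*}}$ and $\psi_{b_{i+1}}^{b_{i+1}^{*}}$ respectively turns them into
\[
\widetilde\eta_i\circ\widetilde\gamma_i\ \approx\ \vphi_{a_i^{*}}^{a_{i+1}^{*}}\ \text{ on }\ \vphi_{a_i}^{a_i^{*}}[F_i'],\qquad\qquad \widetilde\gamma_{i+1}\circ\widetilde\eta_i\ \approx\ \psi_{b_i^{*}}^{b_{i+1}^{*}}\ \text{ on }\ \psi_{b_i}^{b_i^{*}}[G_i'],
\]
while the inclusions built into the choices of $G_i'$ and $F_{i+1}'$ supply exactly the compatibility requirements (each $\widetilde\gamma_i$ and $\widetilde\eta_i$ carrying a tracked set into a tracked set, and likewise for the bonding maps) of the gluing lemma. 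Choosing the error parameters $\delta_i,\delta_i'$ with total sum $<\epsilon$, the isomorphism $\Phi$ exists, and reading off its first rung gives $\psi_\ell^{\infty}\circ\theta\circ\vphi_n^{k}\approx_{\epsilon,F}\Phi\circ\vphi_n^{\infty}$, as asserted; dropping $\theta$ gives the bare uniqueness statement.

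The single genuine difficulty --- the only thing not already present in the proof of Theorem~\ref{F-unique} --- is precisely the interleaving bookkeeping just sketched: because ($\mathscr{WA}$) amalgamates only \emph{after} a threshold has been reached, one is forced to climb to a new threshold on the opposite chain at every half-step, and one must check that this can be arranged while still exhausting $\D$ and $\E$ by the $F_i'$ and $G_i'$ and keeping the telescoped subchains with the intended limits. This is exactly the bookkeeping carried out in \cite[Lemma 4.9]{Kubis-weak} for abstract categories; as with the other Fra\"\i ss\'e-theoretic statements recalled earlier, separability of the objects allows that argument to be run with the finite sets $F$ and the tolerance $\epsilon$ carried along, with no further change.
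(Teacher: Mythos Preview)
Your sketch is correct and follows essentially the same approach as the paper's proof: the paper uses indices $n_i\le k_i$ on the $\D$-side and $m_i\le\ell_i$ on the $\E$-side (your $a_i\le a_i^*$ and $b_i\le b_i^*$), morphisms $\gamma_i\colon\D_{k_i}\to\E_{m_i}$ and $\eta_i\colon\E_{\ell_i}\to\D_{n_{i+1}}$, and then defines $\alpha_i=\psi_{m_i}^{\ell_i}\circ\gamma_i$ and $\beta_i=\vphi_{n_{i+1}}^{k_{i+1}}\circ\eta_i$ (your $\widetilde\gamma_i,\widetilde\eta_i$) to obtain the approximate intertwining, with the same interleaving of threshold-finding and amalgamation that you describe. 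The initialization for the second statement ($n_1=n$, $k_1=k$, $\gamma_1=\theta$, $\sum\epsilon_i<\epsilon$) is likewise handled identically.
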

\begin{proof}
 Suppose $(\D_i, \vphi_i^j)$ and $(\E_i, \psi_i^j)$ are weak Fra\"\i ss\'e sequences of $\KK$, with respective limits $\D$ and $\E$. First, to show the uniqueness, we recursively find sequences $(n_i)_{i\in\mathbb N}$, $(k_i)_{i\in\mathbb N}$, $(m_i)_{i\in\mathbb N}$, $(\ell_i)_{i\in\mathbb N}$ of natural numbers, finite sets
$F'_i\Subset  \D_{k_i}$, $G'_i\Subset \E_{\ell_i}$ and $\KK$-morphisms $\gamma_i: \D_{k_i} \to \E_{m_i}$ and $\eta_i: \E_{\ell_i} \to \D_{n_{i+1}}$ such that for every $i$ we have (see Diagram (\ref{diag-intertwining})),
\begin{itemize}
    \item $n_1= 1$, $n_i\leq k_i < n_{i+1}$ and $m_i \leq \ell_i < m_{i+1}$,
    \item $\D= \overline{\bigcup_i \vphi_{k_i}^{\infty}[F'_i]}$ and $\E= \overline{\bigcup_i \psi_{\ell_i}^{\infty}[G'_i]}$, 
    \item $\vphi_{k_i}^{k_{i+1}}[F'_i]\subseteq  F'_{i+1}$ and $\psi_{\ell_i}^{\ell_{i+1}}[G'_i]\subseteq G'_{i+1}$,
    \item $\psi_{m_i}^{\ell_i} \circ \gamma_i[F'_i] \subseteq G'_{i}$ and $\vphi_{n_{i+1}}^{k_{i+1}} \circ\eta_i[G'_i] \subseteq F'_{i+1}$,
    \item $\eta_i \circ \psi_{m_i}^{\ell_i} \circ \gamma_i \approx_{\frac{1}{2^i}, F'_i} \vphi_{k_i}^{n_{i+1}}$,
    \item $\gamma_{i+1} \circ \vphi_{n_{i+1}}^{k_{i+1}} \circ \eta_i \approx_{\frac{1}{2^i}, G'_i} \psi_{\ell_i}^{m_{i+1}}$.
\end{itemize}

 \begin{equation}  \label{diag-intertwining}
\begin{tikzcd}
\D_{1} \arrow[r, "\vphi_1^{k_1}"] &\D_{k_1} \arrow[r,"\vphi_{k_1}^{n_2}"] \arrow[d,"\gamma_1"]  
& \D_{n_2} \arrow[r, "\vphi_{n_2}^{k_2}"]  
& \D_{k_2}\arrow[r,"\vphi_{k_2}^{n_3}"] \arrow[d,  "\gamma_2"]
&\D_{n_3}\arrow[r, "\vphi_{n_3}^{k_3}"] 
& \D_{k_3} \arrow[d,"\gamma_3"] \arrow[r]
&\dots  \\ 
\E_1 \arrow[r, "\psi_1^{m_1}"]
& \E_{m_1} \arrow[r,"\psi_{m_1}^{\ell_1}"]  
& \E_{\ell_1} \arrow[r, "\psi_{\ell_1}^{m_2}"] \arrow[u, "\eta_1"] 
& \E_{m_2}\arrow[r, "\psi_{m_2}^{\ell_2}"] 
&\E_{\ell_2}\arrow[u, "\eta_2"] \arrow[r, "\psi_{\ell_2}^{m_3}"]
&\E_{m_3}\arrow[r]
&\dots
\end{tikzcd}
\end{equation}
Then the $\KK$-morphisms $\alpha_i: \D_{k_i} \to \E_{\ell_i}$ and $\beta_i: \E_{\ell_i} \to \D_{k_{i+1}}$ given by $\alpha_i = \psi_{m_i}^{\ell_i} \circ \gamma_i$ and $\beta_i= \vphi_{n_{i+1}}^{k_{i+1}} \circ\eta_i$ produce an approximate intertwining between the two sequences, which guarantees the existence of a $*$-isomorphism $\Phi: \D \to \E$  (cf. \cite[Proposition 2.3.2]{Rordam-Stormer}). 

To start, let $\epsilon_n=2^{-n}$ and fix sequences $F_n\Subset \D_n$ and $G_n\Subset \E_n$ such that 
$$
\vphi_n^{n+1}[F_n] \subseteq F_{n+1}, \qquad  \qquad \psi_n^{n+1}[G_n] \subseteq G_{n+1},
$$
and
$$\D= \overline{\bigcup_n \vphi_n^{\infty}[F_n]}, \qquad  \qquad \E= \overline{\bigcup_n \psi_n^{\infty}[G_n]}.$$ 
Let $n_1=1$. Since $(\D_i, \vphi_i^j)$ is a weak Fra\"\i ss\'e sequence of $\KK$, we can find $k_1\geq 1$ such that
\begin{itemize}
    \item[(1)]  the condition ($\mathscr {WA}$) holds for $\epsilon_1$, $n_1$ and $F_1$ at $k_1$: that is, for every $\gamma:\D_{k_1} \to \B$ in $\KK$, there are $k> k_1$ and $\eta: \B \to \D_k$ in $\KK$ such that $\eta \circ \gamma \circ \vphi_1^{k_1} \approx_{\epsilon_1, F_1} \vphi_1^{k}$. 
\end{itemize}
Let $F'_1 = \vphi_1^{k_1}[F_1]$.
Using the condition ($\mathscr U$) for the weak Fra\"\i ss\'e sequence $(\E_i, \psi_i^j)$, there is a natural number $m_1\geq 1$ and a $\KK$-morphism  $\gamma_1: \D_{k_1} \to \E_{m_1}$. 
Again, since $(\E_i, \psi_i^j)$ is a weak Fra\"\i ss\'e sequence, find $\ell_1\geq m_1$ such that for the sequence $(\E_i, \psi_i^j)$ 
\begin{itemize}
    \item[(2)] the condition ($\mathscr {WA}$) holds for $\epsilon_1$, $m_1$ and $G_{m_1} \cup\gamma_1[F'_1]$ at $\ell_1$.
\end{itemize}
Let $G'_1= \psi_{m_1}^{\ell_1} [G_{m_1} \cup\gamma_1[F'_1]]$. By (1) there are $n_2>k_1$ and a $\KK$-morphism $\eta_1: \E_{\ell_1} \to \D_{n_{2}}$ such that (notice the choice of $F'_1$)
$$
\eta_1 \circ \psi_{m_1}^{\ell_1} \circ \gamma_1 \approx_{\epsilon_1, F'_1} \vphi_{k_1}^{n_{2}}.
$$
Find $k_2\geq n_2$ such that for the sequence $(\D_i, \vphi_i^j)$
\begin{itemize}
    \item[(3)] the condition ($\mathscr {WA}$) holds for $\epsilon_2$, $n_2$ and $F_{n_2} \cup \eta_1[G'_1]$ at $k_2$.
\end{itemize}
Let $F'_2 = \vphi_{n_2}^{k_2}[F_{n_2} \cup \eta_1[G'_1]]$. By (2) there are $m_2>\ell_1$ and a $\KK$-morphism $\gamma_2: \D_{k_2} \to \E_{m_2}$ such that 
$$
\gamma_2 \circ \vphi_{n_2}^{k_2} \circ \eta_1 \approx_{\epsilon_1, G'_1} \psi_{\ell_1}^{m_2}.
$$
Find $\ell_2\leq m_2$ such that for the sequence $(\E_i,\psi_i^j)$
\begin{itemize}
    \item[(4)] the condition ($\mathscr {WA}$) holds for $\epsilon_2$, $m_2$ and $G_{m_2} \cup \psi_{\ell_1}^{m_2}[G'_1] \cup \gamma_2[F'_2]$ at $k_2$.
\end{itemize}
Let $G'_2= \psi_{m_2}^{\ell_2}[G_{m_2} \cup \gamma_2[F'_2]] \cup \psi_{\ell_1}^{\ell_2}[G'_1]$. By (3) there are $n_3 >k_2$ and a $\KK$-morphism $\eta_2: \E_{\ell_2}\to \D_{n_3}$ such that 
$$\eta_2 \circ \psi_{m_2}^{\ell_2} \circ \gamma_2 \approx_{\epsilon_2, F'_2} \vphi_{k_2}^{n_3}.$$
Continuing this process produces the required approximate intertwining according to Diagram (\ref{diag-intertwining}). 

For the second statement, in the proof above start with $n_1= n$, $F_1 = F$ and let $k=k_1$, $\gamma_1=\theta$ and pick $\epsilon_i$ so that $\sum_{i=1}^\infty \epsilon_i <\epsilon$.
\end{proof}

\subsection{The existence of weak Fra\"\i ss\'e sequences} Clearly in a category $\KK$ a Fra\"\i ss\'e sequence is also  weak Fra\"\i ss\'e  and therefore Fra\"\i ss\'e categories contain weak Fra\"\i ss\'e sequences. However, a weakening of the notions of the near amalgamation property and separability of a category is sufficient (and necessary) to guarantee the existence of weak Fra\"\i ss\'e sequences. These categories are called ``weak Fra\"\i ss\'e categories" and they are studied in \cite{Kubis-weak}. As with Fra\"\i ss\'e sequences, if a category $\CC$ dominates a larger category $\KK$, then a weak Fra\"\i ss\'e sequence of $\CC$ (if it exists) is also a weak Fra\"\i ss\'e sequence of $\KK$. This remains true even if $\CC$ ``weakly dominates" $\KK$ (Proposition \ref{w-dom-thm}).

\begin{definition}\label{w-dom-def}
A category $\mathfrak C$  \emph{ weakly dominates} $\KK$ if $\mathfrak C$ is a subcategory of $\KK$ and for any given $\epsilon >0$, we have
 \begin{itemize}
     \item[($\mathscr C$)]  for every $\A\in \KK$ there are $\C\in \mathfrak C$ and $\vphi: \A \to \C$ in $\KK$,  i.e. $\mathfrak C$ is cofinal in $\KK$,
     \item[($\mathscr {WD}$)] for every $\A\in \mathfrak C$ and for every $F\Subset \A$, there exist $\zeta: \A \to \A'$ in $\mathfrak C$ such that for every $\vphi:\A' \to \B$ in $\KK$, there are  $\beta: \B \to \C$  in $\KK$ with $\C \in \mathfrak C$ and $\alpha: \A \to \C$ in $\mathfrak C$ such that $\alpha \circ \zeta \approx_{\epsilon, F} \beta \circ \vphi \circ \zeta.$
 \end{itemize}
\end{definition}
The following notion of ``weak amalgamation property" has been first identified by Ivanov \cite{Ivanov}
and later independently by Kechris and Rosendal \cite{Kechris-Rosendal} in model theory.
A category $\CC$ is called weakly Fra\"\i ss\'e if 
\begin{itemize}[leftmargin=*]
    \item it has the joint embedding property,
  \item $\CC$ has the \emph{weak near amalgamation property}:  for every $\epsilon>0$, $\A\in \KK$ and $F\Subset \A$ there is $\zeta: \A \to \A'$ such that for any $\KK$-morphisms $\vphi : \A' \to \B$, $\psi: \A' \to \C$ there are $\D\in \KK$ and $\KK$-morphisms $\vphi': \B \to \D$ and $\psi' : \C \to \D$ such that $\|\vphi'\circ \vphi \circ \zeta(a) , \psi' \circ \psi \circ \zeta(a)\|<\epsilon$, for every $a\in F$.
  \item $\CC$ is \emph{weakly separable}: it is weakly dominated by a countable subcategory. 
\end{itemize}

The proof of the following theorem is a straightforward metric adaptation of Theorem 4.6 of \cite{Kubis-weak} (by adding $2^{-n}$ and finite subsets to the proof).
\begin{theorem}
 A category has a weak Fra\"\i ss\'e sequence if and only if it is weakly Fra\"\i ss\'e. 
\end{theorem}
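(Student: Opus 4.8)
The plan is to prove the two implications separately; the forward one is a direct unwinding of the definitions, the converse the standard recursive back-and-forth.

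For ``has a weak Fra\"\i ss\'e sequence $\Rightarrow$ weakly Fra\"\i ss\'e'', suppose $(\D_n,\vphi_n^m)$ is a weak Fra\"\i ss\'e sequence of the category $\KK$. The joint embedding property is immediate from ($\mathscr U$): two $\KK$-objects embed into some $\D_n$ and $\D_{n'}$, then into $\D_{\max(n,n')}$ along the chain. For the weak near amalgamation property, given $\epsilon>0$, $\A\in\KK$ and $F\Subset\A$, I would fix $\iota\colon\A\to\D_n$ via ($\mathscr U$), apply ($\mathscr{WA}$) to $(\epsilon/2,n,\iota[F])$ to get $m\ge n$, and take the buffer $\zeta:=\vphi_n^m\circ\iota\colon\A\to\D_m$. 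Then for arbitrary $\KK$-morphisms $\vphi\colon\D_m\to\B$ and $\psi\colon\D_m\to\C$, two applications of ($\mathscr{WA}$) (to $\gamma=\vphi$ and to $\gamma=\psi$) yield $k_1,k_2>m$ and morphisms $\eta_i$ with $\eta_i\circ(\cdot)\circ\vphi_n^m\approx_{\epsilon/2,\iota[F]}\vphi_n^{k_i}$; composing with $\vphi_{k_i}^{K}$ for $K=\max(k_1,k_2)$ and using that unital $*$-homomorphisms are contractive, both $\vphi_{k_1}^{K}\circ\eta_1\circ\vphi\circ\zeta$ and $\vphi_{k_2}^{K}\circ\eta_2\circ\psi\circ\zeta$ are $(\epsilon/2)$-close on $F$ to $\vphi_n^{K}\circ\iota$, hence $\epsilon$-close to each other, giving the required cocone into $\D_K$. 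Finally, weak separability is witnessed by the countable subcategory $\KK_0$ with objects $\{\D_n:n\in\mathbb N\}$ and morphisms $\{\vphi_n^m:n\le m\}$: cofinality ($\mathscr C$) is ($\mathscr U$), and ($\mathscr{WD}$) for a triple $(\epsilon,\D_n,F)$ is \emph{literally} ($\mathscr{WA}$), read off with $\zeta=\vphi_n^m$, $\beta=\eta$, $\C=\D_k$, $\alpha=\vphi_n^k$.

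For the converse, the plan is the usual construction by recursion with bookkeeping. By weak separability fix a countable subcategory $\KK_0$ weakly dominating $\KK$ and, for each $\KK_0$-object, a countable dense subset. I would build a $\KK$-sequence $(\D_n,\vphi_n^{n+1})$ all of whose objects are $\KK_0$-objects, servicing along an $\omega$-enumeration with infinite repetitions the following kinds of requests: (a) absorb a given $\KK_0$-object; (b) given a $\KK_0$-morphism $\alpha$ whose domain occurs as some $\D_N$ in the chain, extend past its codomain so that $\vphi_N^k\approx\delta\circ\alpha$ for a suitable $\delta$ and the bookkeeping tolerance; (c) realize ($\mathscr{WA}$) for a triple $(2^{-j},n,F)$ with $F$ finite in the fixed dense subset of $\D_n$. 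A type-(a) request is met using the joint embedding property of $\KK$ and cofinality ($\mathscr C$) to keep the new top in $\KK_0$; a type-(c) request is met by inserting at the current top $\D_N$ (with $N\ge n$) the buffer $\zeta\colon\D_N\to\D_{N+1}$ furnished by ($\mathscr{WD}$) applied to $(\D_N,\vphi_n^N[F],2^{-j})$, so that afterwards \emph{every} $\KK$-morphism $\gamma\colon\D_{N+1}\to\B$---of which there may be uncountably many---is automatically redirected, via ($\mathscr{WD}$), to a $\KK_0$-object $\C$ along a $\KK_0$-morphism $\alpha\colon\D_N\to\C$ with $\beta\circ\gamma\circ\zeta\approx\alpha$ for some $\beta\colon\B\to\C$; the later type-(a) and type-(b) requests absorbing this $\C$ and $\alpha$ then turn $\alpha$ into a chain map $\vphi_N^k$ up to the prescribed error, which is exactly what ($\mathscr{WA}$) demands. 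Iterating so that every request is serviced infinitely often produces the sequence; ($\mathscr U$) follows from the type-(a) requests together with ($\mathscr C$), and ($\mathscr{WA}$) follows by reading back a type-(c) request together with the subsequent absorptions.

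The step I expect to be the main obstacle is precisely this bookkeeping for the converse: the ($\mathscr{WD}$)-buffer for a type-(c) request has to be in place \emph{before} any outgoing morphism of that stage is ever examined, and the (countably many, since they live in $\KK_0$) redirected targets and their connecting morphisms must subsequently be absorbed \emph{compatibly with the chain maps}, which in turn draws on the weak near amalgamation property / an earlier buffer; arranging the enumeration so that all of these dependencies are respected is the entire content of the argument. Everything else is a routine metric decoration of the purely categorical proof of \cite[Theorem 4.6]{Kubis-weak}: one replaces each ``commuting square'' by ``$\epsilon$-commuting on a finite set'', keeps the errors summable along the construction, and uses at every composition that unital $*$-homomorphisms are contractions, so that approximate commutativity established at one stage is not destroyed by composing with later chain maps.
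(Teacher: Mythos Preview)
Your proposal is correct and takes essentially the same approach as the paper: the paper's entire proof is the single sentence ``a straightforward metric adaptation of Theorem 4.6 of \cite{Kubis-weak} (by adding $2^{-n}$ and finite subsets to the proof)'', and your proposal is precisely such an adaptation, spelled out in considerably more detail and with the same reference cited. Your forward direction (deriving JEP, weak near amalgamation, and weak separability from ($\mathscr U$) and ($\mathscr{WA}$)) is fully correct, and your converse is the standard recursive construction with the ($\mathscr{WD}$)-buffer playing the role you describe; the bookkeeping obstacle you highlight is exactly the content of the cited argument.
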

 The abstract category theoretic version of the next proposition is \cite[Lemma 4.3]{Kubis-weak} and its proof can again be adjusted to work for the metric case; similar to the proof of Proposition \ref{Fraisse-dominate}.
\begin{proposition} \label{w-dom-thm}
Suppose a category $\mathfrak C$ weakly dominates $\KK$ and  $(\D_n, \vphi_n^m)$ is a weak Fra\"\i ss\'e sequence of $\mathfrak C$, then 
 $(\D_n, \vphi_n^m)$ is also a weak Fra\"\i ss\'e sequence of $\KK$.
\end{proposition}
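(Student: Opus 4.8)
The plan is to follow the proof of Proposition~\ref{Fraisse-dominate} almost verbatim, replacing the use of condition ($\mathscr A$) by its weak analogue ($\mathscr{WA}$). The verification that $(\D_n,\vphi_n^m)$ satisfies ($\mathscr U$) in $\KK$ is identical to the Fra\"\i ss\'e case: given $\A\in\KK$, cofinality ($\mathscr C$) provides $\C\in\mathfrak C$ and a $\KK$-morphism $\vphi:\A\to\C$, and since $(\D_n,\vphi_n^m)$ satisfies ($\mathscr U$) in $\mathfrak C$ there is a $\mathfrak C$-morphism $\psi:\C\to\D_n$ for some $n$; then $\psi\circ\vphi:\A\to\D_n$ is the required $\KK$-morphism.

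For ($\mathscr{WA}$) in $\KK$, fix $\epsilon>0$, $n\in\mathbb N$ and $F\Subset\D_n$. Since $(\D_n,\vphi_n^m)$ is a weak Fra\"\i ss\'e sequence of $\mathfrak C$, I would choose $m\geq n$ witnessing ($\mathscr{WA}$) for $\mathfrak C$ with parameters $\epsilon/2$, $n$, $F$, and claim the same $m$ works in $\KK$. So let $\gamma:\D_m\to\B$ be an arbitrary $\KK$-morphism; I must produce $k>m$ and a $\KK$-morphism $\eta:\B\to\D_k$ with $\vphi_n^k\approx_{\epsilon,F}\eta\circ\gamma\circ\vphi_n^m$. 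First I would apply the domination property ($\mathscr D$) to the $\KK$-morphism $\gamma:\D_m\to\B$ (note $\D_m\in\mathfrak C$), to the finite set $\vphi_n^m[F]\Subset\D_m$, and with tolerance $\epsilon/2$, obtaining $\C\in\mathfrak C$, a $\KK$-morphism $\beta:\B\to\C$ and a $\mathfrak C$-morphism $\alpha:\D_m\to\C$ with $\alpha\approx_{\epsilon/2,\,\vphi_n^m[F]}\beta\circ\gamma$. Then I would apply the weak amalgamation witnessed by $m$ to the $\mathfrak C$-morphism $\alpha:\D_m\to\C$ (legitimate since $\C\in\mathfrak C$): this yields $k>m$ and a $\mathfrak C$-morphism $\eta':\C\to\D_k$ with $\vphi_n^k\approx_{\epsilon/2,F}\eta'\circ\alpha\circ\vphi_n^m$. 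Finally I would set $\eta=\eta'\circ\beta:\B\to\D_k$, which is a $\KK$-morphism because $\mathfrak C$ is a subcategory of $\KK$.

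It then remains to assemble the estimate. For $a\in F$ one has $\|\vphi_n^k(a)-\eta'(\alpha(\vphi_n^m(a)))\|<\epsilon/2$ from the weak amalgamation step and $\|\alpha(\vphi_n^m(a))-\beta(\gamma(\vphi_n^m(a)))\|<\epsilon/2$ from the domination step (since $\vphi_n^m(a)\in\vphi_n^m[F]$); applying the contractive $*$-homomorphism $\eta'$ to the latter and recalling $\eta=\eta'\circ\beta$ gives $\|\eta'(\alpha(\vphi_n^m(a)))-\eta(\gamma(\vphi_n^m(a)))\|<\epsilon/2$, whence the triangle inequality yields $\vphi_n^k\approx_{\epsilon,F}\eta\circ\gamma\circ\vphi_n^m$, as needed.

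I do not anticipate a real obstacle: this is a routine metric adaptation of \cite[Lemma~4.3]{Kubis-weak}, entirely parallel to Proposition~\ref{Fraisse-dominate}. The only points requiring a little care are to feed ($\mathscr D$) the pushed-forward set $\vphi_n^m[F]$ rather than $F$ itself (so that, after composing with $\eta'$, the two sides are close precisely on the set on which the weak amalgamation conclusion is phrased), and to split the error $\epsilon$ evenly between the domination and the weak amalgamation steps. The facts that $\eta'\circ\beta$ is again a $\KK$-morphism and that $*$-homomorphisms are contractive are used silently, exactly as in the Fra\"\i ss\'e case.
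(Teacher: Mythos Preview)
Your proposal is correct and is exactly the approach the paper has in mind: the paper does not give a detailed proof but simply refers to \cite[Lemma~4.3]{Kubis-weak} and says the argument is similar to Proposition~\ref{Fraisse-dominate}, and what you have written is precisely that adaptation, with the correct care taken to apply ($\mathscr D$) to the pushed-forward set $\vphi_n^m[F]$ and to split the $\epsilon$.
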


\section{Strongly self-absorbing $C^*$-algebras and Fra\"\i ss\'e limits} \label{ssa-fl-sec}
For $C^*$-algebras $\A$ and $\B$ we let $\A\otimes \B$ to denote their ``minimal" (or ``spacial") tensor product.
A $C^*$-algebra  is called ``self-absorbing" if it is $*$-isomorphic to its (minimal) tensor product with itself. Recall that $*$-homomorphisms $\vphi_i: \A \to \B$ $(i=1,2)$ between separable $C^*$-algebras $\A,\B$ are \emph{approximately unitarily equivalent} if there is a sequence $(u_n)_{n\in\mathbb N}$ of unitaries in the multiplier algebra of $\B$ such that 
$$
\lim_{n\to\infty}\|u_n^* \vphi_1(a)u_n -\vphi_2(a)\| = 0,
$$
for every $a\in \A$.
\begin{definition} Let $\D$ be a separable unital $C^*$-algebra.
\begin{itemize}[leftmargin=*]
    \item   $\D$ has \emph{approximately inner  half-flip} if the maps $\id_\D \otimes 1_\D$ and $1_\D \otimes \id_\D$ from $\D$ to $\D\otimes \D$ are approximately unitarily equivalent.
    \item $\D$ is \emph{strongly self-absorbing} if $\D\not\cong \mathbb C$ and there is a $*$-isomorphism $\vphi: \D \to \D \otimes \D$ which is approximately unitary equivalent to $\id_\D \otimes 1_\D$.
\end{itemize}
\end{definition}

\begin{remark}
Any strongly self-absorbing $C^*$-algebra $\D$ has \emph{approximately inner  flip} \cite{Toms-Winter} which is a stronger notion than approximately inner  half-flip and states that the flip $*$-automorphism $\sigma_\D: \D \otimes \D \to \D \otimes \D$, which sends $a\otimes b$ to $b\otimes a$, is approximately unitarily equivalent to the identity map on $\D \otimes \D$. 
The notion of approximately inner  flip was studied for $C^*$-algebras by Effros and Rosenberg \cite{Effros-Rosenberg}, inspired by a profound result of Connes about the hyperfinite $II_1$ factor. Any $C^*$-algebra with approximately inner  (half-)flip is automatically  simple and nuclear (cf. \cite{Effros-Rosenberg} and \cite{Kirchberg-Phillips}). 
\end{remark}

We need the following elementary lemma.
\begin{lemma}\label{aihf-seq}
Suppose $(\D_i, \vphi_i^j)$ is a sequence of unital  $C^*$-algebras and unital $*$-embeddings and $\D=\varinjlim (\D_i, \vphi_i^j)$. Then $\D$ has 
approximately inner  half-flip if and only if for every $\epsilon>0$, $n\in \mathbb N$,  $F \Subset \D_n$ there is a unitary $u\in \D_m\otimes \D_m$ for some $m\geq n$ such that 
$$
\|\vphi_n^m(a) \otimes 1_{\D_m} - u^*(1_{\D_m} \otimes \vphi_n^m(a))u\|<\epsilon
$$
for every $a\in F$.
\end{lemma}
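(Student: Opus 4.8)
The plan is to prove both directions by unravelling the definition of approximate unitary equivalence for the two factor embeddings $\id_\D \otimes 1_\D$ and $1_\D \otimes \id_\D$ from $\D$ to $\D \otimes \D$, and relating unitaries in the multiplier algebra of $\D\otimes\D$ (here $\D$ is unital, so this is just the unitary group of $\D\otimes\D$ itself) to unitaries in the finite stages $\D_m\otimes\D_m$. Throughout I will use that $\D\otimes\D = \varinjlim(\D_i\otimes\D_i, \vphi_i^j\otimes\vphi_i^j)$ (minimal tensor product commutes with inductive limits of unital embeddings), and that $\bigcup_n \vphi_n^\infty[\D_n]$ is dense in $\D$, so $\bigcup_n (\vphi_n^\infty\otimes\vphi_n^\infty)[\D_n\otimes\D_n]$ is dense in $\D\otimes\D$. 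I will write $\psi_n = \vphi_n^\infty$ for the limit maps.

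For the forward direction, assume $\D$ has approximate inner half-flip, and fix $\epsilon>0$, $n$, and $F\Subset\D_n$. By definition there is a unitary $u\in\D\otimes\D$ with $\|\, \psi_n(a)\otimes 1_\D - u^*(1_\D\otimes\psi_n(a))u\,\| < \epsilon/3$ for every $a\in F$ (one unitary suffices for a single $\epsilon$ and finite $F$, by passing far enough along the approximating sequence). Since the $\D_m\otimes\D_m$ are dense and using a standard perturbation argument — a near-unitary in a $C^*$-algebra is close to an honest unitary — I can find $m\geq n$ and a genuine unitary $v\in\D_m\otimes\D_m$ with $\|(\psi_m\otimes\psi_m)(v) - u\|$ as small as I like, say $<\epsilon/6$ (after possibly enlarging $m$ so that $\D_m\otimes\D_m$ contains an element within $\epsilon/6$ of $u$, then correcting to a unitary, which costs another small amount). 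Then for $a\in F$, writing $b = \vphi_n^m(a)\in\D_m$ so that $\psi_m(b) = \psi_n(a)$, a triangle-inequality estimate gives $\|\,(\psi_m\otimes\psi_m)\big(b\otimes 1_{\D_m} - v^*(1_{\D_m}\otimes b)v\big)\| < \epsilon/3 + 2\cdot\epsilon/6\cdot\max_{a\in F}\|a\| + \dots$; since each $\psi_m\otimes\psi_m$ is isometric (injective $*$-homomorphism between $C^*$-algebras), and by absorbing constants into the choices of how well $v$ approximates $u$ and how far out $m$ is taken, this yields $\|b\otimes 1_{\D_m} - v^*(1_{\D_m}\otimes b)v\| < \epsilon$, which is exactly the desired conclusion with $u := v$.

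For the reverse direction, assume the finite-stage condition; I must produce, for each $\epsilon>0$ and each $a\in\D$ and finite $F\Subset\D$, a single unitary $w\in\D\otimes\D$ with $\|\, a\otimes 1_\D - w^*(1_\D\otimes a)w\| < \epsilon$ for all $a\in F$ — it is routine (diagonal argument over a countable dense set and a sequence $\epsilon_k\to 0$) that this suffices to build the approximating sequence of unitaries witnessing approximate unitary equivalence of $\id_\D\otimes 1_\D$ and $1_\D\otimes\id_\D$. Given such $F$ and $\epsilon$, by density choose $n$ and $F_0\Subset\D_n$ with each element of $F$ within $\epsilon/3$ of the corresponding element of $\psi_n[F_0]$. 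Apply the hypothesis to $\epsilon/3$, $n$, $F_0$ to get $m\geq n$ and a unitary $u\in\D_m\otimes\D_m$ with $\|\vphi_n^m(a')\otimes 1_{\D_m} - u^*(1_{\D_m}\otimes\vphi_n^m(a'))u\| < \epsilon/3$ for $a'\in F_0$. Set $w = (\psi_m\otimes\psi_m)(u)$, a unitary in $\D\otimes\D$. Applying the $*$-homomorphism $\psi_m\otimes\psi_m$ to the finite-stage inequality, and then using the $\epsilon/3$-approximations to swap $\psi_n[F_0]$ for $F$ (the conjugation by $w$ and the tensoring with $1$ are isometric/contractive, so perturbing the argument perturbs the value by at most the same amount on each side), a triangle inequality gives $\|a\otimes 1_\D - w^*(1_\D\otimes a)w\| < \epsilon/3 + \epsilon/3 + \epsilon/3 = \epsilon$ for every $a\in F$, as required.

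I do not expect any serious obstacle here; the lemma is genuinely elementary. The only points demanding a little care are: (i) the passage between a near-unitary in $\D_m\otimes\D_m$ and an actual unitary (the standard fact that if $\|x^*x - 1\|$ and $\|xx^*-1\|$ are small then $x$ is close to a unitary), which is why one must allow $m$ to grow; and (ii) bookkeeping the various $\epsilon/3$'s so the constants work out — in particular noting that all the relevant maps ($\psi_m\otimes\psi_m$, conjugation by a unitary, $\cdot\otimes 1$) are isometries or contractions so that errors do not amplify. The reduction, in the reverse direction, from "one unitary per $(\epsilon,F)$" to "a full sequence witnessing approximate unitary equivalence" is also standard and I would state it briefly rather than belabour it.
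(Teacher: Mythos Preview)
Your proposal is correct and follows essentially the same approach as the paper's proof. The paper handles the forward direction identically---pick a unitary in $\D\otimes\D$, approximate it by a near-unitary in some $\D_m\otimes\D_m$, correct to an honest unitary, and use that $\vphi_m^\infty\otimes\vphi_m^\infty$ is isometric (the paper fixes the constants by assuming $F$ lies in the unit ball and using $\epsilon/5,\epsilon/10$ rather than your $\epsilon/3,\epsilon/6$)---while the reverse direction is simply declared ``trivial'' in the paper, and your more explicit treatment of it is fine.
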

\begin{proof}
The inverse direction is trivial. For the forward direction, suppose $\epsilon>0$, $n\in \mathbb N$ and  $F \Subset \D_n$ are given. We can assume $\epsilon<1$ and $F$ is contained in the unit ball of $\D_n$. Find a unitary $v\in \D\otimes \D$ such that 
$$
\|\vphi_n^\infty(a) \otimes 1_\D - v^*(1_\D \otimes \vphi_n^\infty(a))v\|<\epsilon/5
$$
for every $a\in F$.
Note that $\D\otimes \D$ is the limit of the sequence $(\D_i\otimes \D_i, \vphi_i^j \otimes \vphi_i^j)$.
For some $m\geq n$ there is an element $b$ in $\D_m\otimes \D_m$ such that  $\|v - \vphi_m^\infty\otimes \vphi_m^\infty(b)\|< \epsilon/10$. Then we have $\|bb^* - 1_{\D_m\otimes \D_m}\|<\epsilon/5$ and $\|b^*b - 1_{\D_m\otimes \D_m}\|<\epsilon/5$. It is easy to check that there is a unitary $u\in \D_m\otimes \D_m$ such that $\|u-b\|<\epsilon/5$. For every $a\in F$ we have 
\begin{align*}
   \|\vphi_n^m(a) \otimes 1_{\D_m} &- u^*(1_{\D_m} \otimes \vphi_n^m(a))u\| \\
   &\leq \|\vphi_n^m(a) \otimes 1_{\D_m} - b^*(1_{\D_m} \otimes \vphi_n^m(a))b\|+2\epsilon/5 \\
   & = \|\vphi_n^\infty(a) \otimes 1_\D - \vphi_m^\infty\otimes \vphi_m^\infty(b)^*(1_\D \otimes \vphi_n^\infty(a))\vphi_m^\infty\otimes \vphi_m^\infty(b)\|+2\epsilon/5\\
   &\leq \|\vphi_n^\infty(a) \otimes 1_\D - v^*(1_\D \otimes \vphi_n^\infty(a))v\|+4\epsilon/5 <\epsilon.
\end{align*}

\end{proof}

In the following $\CC$ and $\KK$ are always categories of unital separable $C^*$-algebras and unital $*$-embeddings.

\begin{definition}\label{tensor expansion}
We say that $\KK$ is a \emph{$\otimes$-expansion} of $\CC$ if 
\begin{itemize}[leftmargin=*]
    \item $\CC$ is a subcategory of $\KK$,
    \item $\A \otimes \A\in \KK$ for every $\A\in \CC$,
    \item if $\vphi: \A \to \B$ is in $\CC$ then $\vphi \otimes \vphi : \A \otimes \A \to \B \otimes \B$ belongs to $\KK$,
    \item for every $\A\in\CC$ the first factor embedding $\id_\A \otimes 1_\A: \A \to \A \otimes \A$ and the second factor embedding $1_\A \otimes \id_\A: \A \to \A \otimes \A$ belong to $\KK$,
    \item for every $\B\in \KK$ and a unitary $u \in \B$, the inner $*$-automorphism $\Ad_u: \B \to \B$ belongs to $\KK$.
\end{itemize}
\end{definition}

Now we are ready to prove the main result of this section. The key tool in the proof is the uniqueness of the weak Fra\"\i ss\'e limit, whenever it exists in a category (Theorem \ref{w-F-unique}).

\begin{theorem}\label{thm-F-ssa}
Suppose $\CC$ is a category of unital separable $C^*$-algebras and unital $*$-embeddings and $\CC$ (weakly) dominates a $\otimes$-expansion of itself. If $\mathfrak C$ has a (weak) Fra\"\i ss\'e limit $\D$ with approximately inner  half-flip, then $\D$ is strongly self-absorbing.
\end{theorem}
\begin{proof}
Suppose $\D=\varinjlim_i (\D_i, \vphi_i^j)$, where $(\D_i, \vphi_i^j)$ is a (weak) Fra\"\i ss\'e sequence of $\CC$ and assume $\KK$ is a $\otimes$-expansion of $\CC$ which is (weakly) dominated by $\CC$. By Proposition \ref{Fraisse-dominate} (Proposition \ref{w-dom-thm}) the sequence $(\D_i, \vphi_i^j)$ is a (weak) Fra\"\i ss\'e sequence of the category $\KK$.
The sequence $(\D_i \otimes \D_i, \vphi_i^j \otimes \vphi_i^j)$ is a $\KK$-sequence and its limit is $\D \otimes \D$.  We will show that $(\D_i \otimes \D_i, \vphi_i^j \otimes \vphi_i^j)$ is also a weak Fra\"\i ss\'e sequence of $\KK$. Hence, the uniqueness of weak Fra\"\i ss\'e limits (Theorem \ref{w-F-unique})  implies that $\D$ is self-absorbing. Then we use the second statement of Theorem \ref{w-F-unique} in order to show that $\D$ is strongly self-absorbing.

To show that $(\D_i \otimes \D_i, \vphi_i^j \otimes \vphi_i^j)$ is a weak Fra\"\i ss\'e sequence of $\KK$, note that the condition ($\mathscr U$) of \ref{def-wF-seq} is satisfied; by the condition ($\mathscr C$), for any $\B\in \KK$ there exist $\A \in\CC$ and $\vphi: \B \to \A$ in $\KK$. Since $(\D_i, \vphi_i^j)$ satisfies ($\mathscr U$) in the category $\CC$, for some $m$ there is a $\KK$-morphism $\psi: \A \to \D_m$. Then the map $(\id_{\D_m} \otimes 1_{\D_m}) \circ \psi \circ \vphi$ is a $\KK$-morphism from $\B$ to $\D_m \otimes \D_m$. 

To see that $(\D_i \otimes \D_i, \vphi_i^j \otimes \vphi_i^j)$ satisfies ($\mathscr {WA}$), suppose $\epsilon>0$, $n\in \mathbb N$ and $F\Subset \D_n\otimes \D_n$ are given. Let $G$ be a finite subset of $\D_n$ such that $F\subseteq_{\epsilon/2} G\otimes G$, where
$$
 G\otimes G :=\{\sum_{i=1}^{k} a_i \otimes b_i : a_i, b_i \in G\}.
$$
Since $\D$ has approximately inner  half-flip, by Lemma \ref{aihf-seq} we can find $m\geq n $ and a unitary $u\in \D_m\otimes \D_m$ such that 
\begin{equation}\label{eq1}
  \|u (\vphi_n^m(a) \otimes 1)u^* - 1 \otimes \vphi_n^m(a)\|<\epsilon /2 
\end{equation}
for every $a\in G$. By increasing $m$, if necessary, we can assume that for $(\D_i, \vphi_i^j)$  the condition ($\mathscr {WA}$) holds in the category $\KK$ for $\epsilon/2$, $n$ and $G$ at $m$ (recall that $(\D_i, \vphi_i^j)$ is also the (weak) Fra\"\i ss\'e sequence of $\KK$), that is 
\begin{itemize}
    \item[(1)] for every $\alpha: \D_m \to \B$ in $\KK$ there exist $k>m$ and $\beta: \B \to \D_k$ in $\KK$ such that $\|\beta \circ \alpha \circ \vphi_n^m (a) -  \vphi_n^k (a)\|< \epsilon/2 $
    for every $a\in G$.
\end{itemize}
We claim that in the category $\KK$ the sequence $(\D_i \otimes \D_i, \vphi_i^j \otimes \vphi_i^j)$ satisfies the condition ($\mathscr {WA}$) for $\epsilon$, $n$ and $F$ at $m$. 
Without loss of generality and after possibly dividing $\epsilon$ by a constant, it is enough to show that 

\begin{itemize}
    \item[($*$)] for any $\KK$-morphism $\gamma: \D_m \otimes \D_m \to \B$ there are $k>m$ and $\eta : \B \to \D_k \otimes \D_k$ in $\KK$ such that $
\|\eta \circ \gamma \circ (\vphi_n^m \otimes \vphi_n^m)(x) -  \vphi_n^k \otimes \vphi_n^k(x)\|< \epsilon/2 
$
where $x$ is of the form  $a \otimes 1$ or $1\otimes a$  for $a \in G$.
\end{itemize}
Let $\iota_i^m: \D_m \to \D_m \otimes \D_m$ $(i=1,2)$ be the unital $*$-embeddings defined by 
$$
\iota_1^m(a) = a \otimes 1 \qquad \text{and} \qquad \iota_2^m (b) = 1 \otimes b.
$$
The map $\gamma \circ \iota_1^m : \D_m \to \B$ is a $\KK$-morphism, hence by (1) there are $k>m$ and a unital $*$-embedding $\eta' :\B \to \D_k$ in $\KK$  such that
\begin{equation}\label{eq2}
   \|\eta' \circ \gamma \circ \iota_1^m \circ\vphi_n^m (a) - \vphi_n^k (a)\|<\epsilon/2
\end{equation}
for every $a\in G$. Let $\lambda: \D_m \otimes \D_m \to \D_k$ be the $\KK$-morphism $\lambda =\eta' \circ \gamma$.

 \begin{equation*}  
\begin{tikzcd}
\D_{n} \arrow[rr, "\vphi_n^{m}"] 
&&\D_{m} \arrow[d,"\iota^{m}_{1}"]\arrow[rr,"\vphi_m^k"]  
&& \D_{k}  \arrow[d,  "\iota_1^k"]
\\ 
\D_n \otimes \D_n  \arrow[rr, "\vphi_n^m\otimes \vphi_n^m"]
&& \D_m \otimes \D_m  \arrow[rr, "\vphi_m^k\otimes \vphi_m^k"] \arrow[dr,', "\gamma"] 
&& \D_k \otimes \D_k \arrow[out=240,in=300,loop,swap,  "\Ad_{\widetilde u^*}"]\\
&&& \B \arrow[uur,', "\eta'" near start] 
\end{tikzcd}
\end{equation*}

Let $\theta: \D_m \otimes \D_m \to \D_k \otimes \D_k$ be the $*$-homomorphism defined by $\theta = (\lambda \circ \iota_2^m) \otimes \vphi_m^k$ ($\theta$ is not necessarily a $\KK$-morphism). Put $\widetilde u = \theta(u)$ and note that $\widetilde u$ commutes with the image of the map $\iota_1^k \circ \lambda \circ \iota_1^m$. By linearity, it is enough to check this for $ u = a \otimes b$: for every $c\in \D_m$ we have
\begin{align*}
    \widetilde u [\iota_1^k \circ \lambda \circ \iota_1^m (c)] 
    &= 
   [\lambda (1 \otimes a) \otimes \vphi_m^k(b)]  [\lambda (c\otimes 1) \otimes 1] \\
  & =   \lambda(c\otimes a) \otimes \vphi_m^k(b)= [\lambda (c\otimes 1) \otimes 1]  [\lambda (1 \otimes a) \otimes \vphi_m^k(b)] \\
   &= [\iota_1^k \circ \lambda \circ \iota_1^m (c)] \widetilde u.
\end{align*}
Applying $\theta$ to (\ref{eq1}) gives us
\begin{equation} \label{eq3}
    \|\widetilde u(\lambda(1 \otimes \vphi_n^m(a))\otimes 1)\widetilde u^* - 1 \otimes \vphi_n^k (a) \|<\epsilon/2,
\end{equation}
for every $a\in G$.

Take $x= a \otimes 1$ for $a\in G$. The inequality \ref{eq2} and the fact that $\widetilde u$ commutes with $\lambda(\vphi_n^m(a) \otimes 1) \otimes 1$ imply that
\begin{align*}
    \|\Ad_{\widetilde u^*}\circ\iota_1^k \circ \lambda \circ (\vphi_n^m \otimes \vphi_n^m)(x) -  ( \vphi_n^k  \otimes \vphi_n^k) (x)\| 
   & =\|\lambda(\vphi_n^m(a)\otimes 1)\otimes 1 - 
    \vphi_n^k(a)\otimes 1\| \\
    &= \|\lambda(\vphi_n^m(a)\otimes 1) - 
    \vphi_n^k(a)\| <\epsilon/2.
\end{align*}
Assume $x = 1 \otimes a$ for $a\in G$. Then by the inequality \ref{eq3}  we have
\begin{align*}
    \|\Ad_{\widetilde u^*}\circ\iota_1^k \circ \lambda \circ (\vphi_n^m \otimes \vphi_n^m)(x) - &( \vphi_n^k  \otimes \vphi_n^k) (x)\| \\
    &=\|\widetilde u(\lambda(1 \otimes \vphi_n^m(a))\otimes 1)\widetilde u^* -  1 \otimes \vphi_n^k (a)  \|<\epsilon/2.
\end{align*}
The $\KK$-morphism $\eta = \Ad_{\widetilde u^*} \circ \iota_1^m \circ \eta'$ satisfies $(*)$ as required and therefore by the uniqueness of the weak Fra\"\i ss\'e limit $\D\otimes \D$ is $*$-isomorphic to $\D$.

To see that $\D$ is strongly self-absorbing, for each natural number $i$ consider the natural induced $*$-embeddings $\vphi_i^\infty: \D_i \to \D$ and the $*$-embeddings $\theta_i: \D_i \to \D \otimes \D$ defined by $\theta_i=(\vphi_i^\infty \otimes \vphi_i^\infty) \circ \iota^i_2$. Fix finite subsets $G_i$ of $\D_i$ such that $\vphi_i^{i+1}[G_i] \subseteq G_{i+1}$ and $\bigcup_i^\infty \vphi_i^\infty[G_i]$ is dense in $\D$.  Since $(\D_i, \vphi_i^j)$ is a weak Fra\"\i ss\'e sequence of $\KK$, by the second statement of Theorem \ref{w-F-unique}, for every $n$ there is $m\geq n$ and a $*$-isomorphism $\Phi_n : \D \to \D \otimes \D$ such that $\| \theta_m \circ \vphi_n^m (a) - \Phi_n \circ \vphi_n^\infty(a)\|<2^{-n}$ for every $a\in G_n$.
\begin{equation*}
\begin{tikzcd}
\D_n \arrow[r,  "\vphi_n^m"] 
& \D_m \arrow[r,"\vphi_m^\infty"] \arrow[d,  "\iota^{m}_2"]  &   \D  \arrow[d, ', "\Phi_n"]   \\ 
& \D_m \otimes \D_m \arrow[r, "\vphi_m^\infty \otimes \vphi_m^\infty"] 
& \D\otimes \D
\end{tikzcd}
\end{equation*}
Clearly for every $i$ and $j>i$, 
the diagram
\begin{equation*}
\begin{tikzcd}
\D_i \arrow[r,  "\vphi_i^\infty"] \arrow[d,  "\iota^{i}_2"]  &   \D_{j} \arrow[d, "\iota^{j}_2"]  \\ 
\D_i \otimes \D_i \arrow[r, "\vphi_i^j \otimes \vphi_i^j"]  & \D_{j}\otimes \D_{j}
\end{tikzcd}
\end{equation*}
commutes. Hence for every $n$ and $k,\ell>n$ we have  $\Phi_{k}|_{\vphi_n^\infty[G_n]} \approx_{1/2^n} \Phi_{\ell}|_{\vphi_n^\infty[G_n]}$. Therefore  $\Phi=\lim_n \Phi_n$ is a well-defined $*$-isomorphism from $\D $ onto $\D \otimes \D$. 

We claim that $\Phi$ is approximately unitarily equivalent to $\id_\D \otimes 1_\D$. 
Suppose $\epsilon>0$ and $F\Subset \D$ are given. Without loss of generality let us assume that $F$ is a subset of $\vphi_n^\infty[G_n]$, for a large enough $n$. Find $m\geq n$ such that
\begin{align}\label{eq5}
 \Phi(\vphi_n^\infty(a)) \approx_{\epsilon/3} \Phi_m(\vphi_n^\infty(a)) &\approx_{\epsilon/3} \vphi_m^\infty \otimes \vphi_m^\infty (1\otimes \vphi_n^m(a)) \\
\nonumber &= 1 \otimes \vphi_n^\infty(a)   
\end{align}
for every $a\in G_n$.
By increasing $m$, if necessary, find a unitary $u$ in $\D_m \otimes \D_m$ such that  
$$
  \|\vphi_n^m(a)\otimes 1 - u^* (1 \otimes \vphi_n^m(a)) u\|< \epsilon/3.
$$
Let $\widetilde u = \vphi_m^\infty \otimes \vphi_m^\infty(u)$. Apply $\vphi_m^\infty \otimes \vphi_m^\infty$ to the above inequality to get
\begin{equation}\label{eq6}
     \|\vphi_n^\infty(a)\otimes 1 - \widetilde u ^* (1 \otimes \vphi_n^\infty(a)) \widetilde u\|< \epsilon  /3 
\end{equation}
for every $a\in G_n$. Hence, from (\ref{eq5}) and (\ref{eq6}) we have
$$
  \|\vphi_n^\infty(a)\otimes 1 - \widetilde u ^* \Phi(\vphi_n^\infty(a)) \widetilde u\|< \epsilon
$$
for every $a\in G_n$. Therefore $\Phi$ is approximately unitarily equivalent to $\id_\D \otimes 1_\D$.
\end{proof}

\section{The Jiang-Su algebra as a Fra\"\i ss\'e limit} \label{sec-Z is fl}

Let us start by recalling some definitions and fundamental facts about dimension-drop algebras and the Jiang-Su algebra from \cite{Jiang-Su}. For every positive integer $n$, by $M_n$ we denote the $C^*$-algebra of all complex $n\times n$-matrices, with the unit $1_n$. For positive integers $p, q$ the \emph{dimension-drop algebra} $\Z_{p,q}$ is defined by 
$$
\Z_{p,q} = \{f\in C([0,1], M_{pq}): f(0)\in M_p \otimes 1_q \text{ and } f(1)\in 1_p \otimes M_q\}.
$$
A dimension-drop algebra is called \emph{prime} if $(p,q) = 1$.
A unital $*$-embedding  $\vphi: \Z_{p,q} \to \Z_{p', q'}$ between dimension-drop algebras is called \emph{diagonalizable} (in \cite{Masumoto}) if there are continuous maps $\{\xi_i: [0,1] \to [0,1]: i= 1, \dots, k\}$ and a unitary  $u\in C([0,1], M_{p'q'})$ such that 
$$
\vphi(f) = \Ad_{u} \circ\begin{bmatrix}
f\circ \xi_1 &  & 0 \\
& \ddots & \\
0 &  & f \circ \xi_k
\end{bmatrix}
$$
for every $f\in \Z_{p,q}$. Let $\Lambda^\vphi = \{\xi_1, \xi_2, \dots, \xi_k\}$ and define $\Delta^{\vphi}: \Z_{p,q} \to C([0,1], M_{p'q'})$  by
$$
\Delta^{\vphi}(f) = \begin{bmatrix}
f\circ \xi_1 &  & 0 \\
& \ddots & \\
0 &  & f \circ \xi_k
\end{bmatrix}
$$
for every $f\in \Z_{p,q}$. 

\begin{proposition}\label{Z-facts} \cite{Jiang-Su}
 There is a sequence  $(\A_n , \vphi_n^m)$ of prime dimension-drop algebras and  diagonalizable $*$-embeddings such that for every $\xi\in \Lambda^{\vphi_n^{m}}$  the diameter of the image of  $\xi$ is not greater than $1/2^{m-n}$.
   The limit of any such sequence  is unital, simple and has a unique tracial state.
\end{proposition}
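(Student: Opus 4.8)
The plan is to verify the two assertions of Proposition~\ref{Z-facts} separately, using only elementary properties of prime dimension-drop algebras and diagonalizable maps, as in \cite{Jiang-Su}. For the existence of such a sequence, I would start from an arbitrary enumeration of pairs of coprime integers (with multiplicities, so that each prime pair appears infinitely often) and construct $(\A_n,\vphi_n^m)$ recursively. At stage $n$ one has a prime dimension-drop algebra $\A_n=\Z_{p_n,q_n}$ and wants to produce a prime $\A_{n+1}=\Z_{p_{n+1},q_{n+1}}$ together with a diagonalizable unital $*$-embedding $\vphi_n^{n+1}:\A_n\to\A_{n+1}$ whose associated continuous maps $\xi\in\Lambda^{\vphi_n^{n+1}}$ each have image of diameter $\le 1/2$. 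The key input here is the standard fact (cf. \cite[Lemma~2.3, Proposition~2.2]{Jiang-Su}) that for any prime $\Z_{p,q}$ and any sufficiently large multiplicities, one can build a unital embedding of $\Z_{p,q}$ into a prime dimension-drop algebra of the form $\Z_{p',q'}$ with $p\mid p'$, $q\mid q'$ and prescribed "paths" $\xi_i$ that are required to satisfy the boundary conditions $\xi_i(0)\in\{0,1\}$, $\xi_i(1)\in\{0,1\}$ with the correct counting so that $f\circ\xi_i$ lands in the appropriate matrix subalgebra at the endpoints; by subdividing $[0,1]$ finely enough and choosing the $\xi_i$ to be (affine) contractions onto subintervals of length $\le 1/2$, one gets the diameter bound. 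Iterating, $\vphi_n^m$ is a composition of $m-n$ such maps, and the chain rule for diameters of images of composed contractions gives the bound $1/2^{m-n}$.

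For the second assertion I would show that the limit $\A=\varinjlim(\A_n,\vphi_n^m)$ is unital, simple, and monotracial. Unitality is immediate since all maps are unital. For simplicity, the standard argument is: any nonzero ideal $I\subseteq\A$ restricts to a nonzero ideal $I_n=I\cap\vphi_n^\infty[\A_n]$ in some $\vphi_n^\infty[\A_n]$; ideals of $\Z_{p,q}$ correspond to (relatively) closed subsets of $[0,1]$ (with the endpoint caveats), so $I_n$ "lives on" a proper closed subset $E_n\subsetneq[0,1]$ of positive something; but the diameter condition forces each $\xi\in\Lambda^{\vphi_n^m}$ to have tiny image, and — crucially — for diagonalizable maps between prime dimension-drop algebras the point-evaluations at \emph{every} $t\in[0,1]$ of $\vphi_n^m(f)$ "see" the value of $f$ at points spread across all of $[0,1]$ up to error $1/2^{m-n}$ (this is where primeness and the counting of the $\xi_i$ enter, ensuring no evaluation is supported on a proper subinterval). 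Hence for $m$ large, $\vphi_n^m[\A_n]\cap I$ forces $f$ to vanish on a $1/2^{m-n}$-net of $[0,1]$, so $I$ is everything. The same uniform-spreading property yields the unique trace: any tracial state $\tau$ on $\A$ restricts to a trace on each $\A_n$, and a trace on $\Z_{p,q}$ is given by integration of the normalized matrix trace against a probability measure $\mu$ on $[0,1]$; the diameter condition means $\tau\circ\vphi_n^m$ is (up to $1/2^{m-n}$) the same no matter which such $\mu$ one uses on $\A_m$, so in the limit $\tau$ is forced to be the integral against Lebesgue measure, independently of $\tau$.

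The main obstacle is the second assertion, specifically making precise and proving the "uniform spreading" property of iterated diagonalizable embeddings: that the diameter bound on the individual $\xi$'s, together with the endpoint/boundary constraints forced by primeness, implies that for large $m$ every evaluation functional $f\mapsto\vphi_n^m(f)(t)$ depends (approximately, within $1/2^{m-n}$ in the appropriate sense) on the \emph{entire} function $f$ via a near-uniform distribution of sample points. One has to be careful that the $\xi_i$ in a diagonalizable map need not individually be surjective onto $[0,1]$ — rather it is the \emph{union} of their images, over the block decomposition, that must cover $[0,1]$ finely; this is exactly the content of the Jiang--Su construction and is what both simplicity and monotraciality rest on. Everything else (unitality, the recursive bookkeeping of coprime pairs, the chain-rule diameter estimate) is routine.
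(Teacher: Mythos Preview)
The paper does not give its own proof of this proposition; it is simply quoted from \cite{Jiang-Su} (the relevant facts there are Propositions~2.5, 2.7 and~2.8), so there is no in-paper argument to compare against. Your outline is essentially the Jiang--Su proof and would go through, with two corrections worth noting.

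In the existence step, an affine map $\xi:[0,1]\to[0,1]$ with $\xi(0),\xi(1)\in\{0,1\}$ is either constant or surjective, so you cannot impose both the endpoint constraint and image-diameter $\le 1/2$ with affine paths; the Jiang--Su $\xi_i$ are piecewise linear. Relatedly, your ``chain rule for diameters'' needs a Lipschitz bound at each step, not merely an image-diameter bound: one has $\operatorname{diam}\xi(\Img\eta)\le\operatorname{Lip}(\xi)\cdot\operatorname{diam}(\Img\eta)$, whereas a bound on $\operatorname{diam}(\Img\xi)$ alone does not improve under composition.

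In the monotraciality step, the computation you describe shows that $t\mapsto k^{-1}\sum_i \Tr f(\xi_i(t))$ has oscillation at most $\omega_{\Tr\circ f}(2^{n-m})$, so any two probability measures on $[0,1]$ integrate it to nearly the same value; this yields \emph{uniqueness} of the limiting trace on each $\A_n$, but not that it is Lebesgue. Finally, the ``uniform spreading'' you flag as the main obstacle is cheaper than you suggest and does not rely on primeness: injectivity of $\vphi_n^m$ forces $\bigcup_i \Img(\xi_i)=[0,1]$, and then for any $s,t\in[0,1]$ there is $i$ with $s\in\Img(\xi_i)$, whence $|\xi_i(t)-s|\le\operatorname{diam}\Img(\xi_i)\le 2^{n-m}$. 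Primeness is needed only for the existence of the unital embeddings.
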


In \cite{Jiang-Su} Jiang and Su use tools from the classification theory to show that there is up to $*$-isomorphisms a unique simple and monotracial (i.e. has a unique tracial state) $C^*$-algebra which is the limit of a sequence of prime dimension-drop algebras and unital $*$-embeddings. They denote the limit of a (any) sequence as in Proposition \ref{Z-facts} by $\Z$, which is nowadays called the Jiang-Su algebra. 
A closer look at the proof of Proposition \ref{Z-facts} in \cite{Jiang-Su} shows that one can choose the sequences so that they satisfy some extra properties.
\begin{proposition} \label{my-Z-seq}
 There is a sequence  $(\Z_{p_n, q_n} , \vphi_n^m)$ of prime dimension-drop algebras and  diagonalizable $*$-embeddings such that
 \begin{itemize}
     \item[($\dagger$)] for every $m\geq n$, if $\Lambda^{\vphi_n^{m}} = \{\xi_1 \dots, \xi_k\}$ then we have $\xi_1(x) \leq \xi_2(x) \dots \leq \xi_k(x)$ for all $x\in[0,1]$ and the diameter of the image of each  $\xi_i$ is not greater than $1/2^{m-n}$, and
     \item[$(\ddagger)$]  $p_mq_m$ is a multiple of $m$, for every natural number $m$. 
 \end{itemize}
\end{proposition}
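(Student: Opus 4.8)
The plan is to revisit the recursive construction behind Proposition \ref{Z-facts} and observe that it can be run so as to produce, in addition, the two extra properties $(\dagger)$ and $(\ddagger)$; no new facts about dimension-drop algebras beyond those used for Proposition \ref{Z-facts} are needed. Recall that such a sequence is built one link at a time: having produced a prime dimension-drop algebra $\Z_{p_n,q_n}$, one selects $\Z_{p_{n+1},q_{n+1}}$ and a diagonalizable unital $*$-embedding $\vphi_n^{n+1}$ whose eigenvalue maps in $\Lambda^{\vphi_n^{n+1}}$ have the required metric behaviour and whose images, counted with the appropriate multiplicities at $0$ and $1$, are spread out enough to force the limit to be simple and monotracial. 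Two choices are free at each step: the eigenvalue maps themselves, subject only to the endpoint-compatibility constraints and the covering condition; and the matrix sizes $p_{n+1},q_{n+1}$, subject only to being coprime multiples of $p_n,q_n$ of the shape dictated by the eigenvalue-map data. These two freedoms are exactly what I would exploit.

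For $(\dagger)$ I would, at every step, select the eigenvalue maps of $\vphi_n^{n+1}$ so that (a) they form a pointwise chain, i.e. can be listed as $\zeta_1,\dots,\zeta_\ell$ with $\zeta_1(x)\le\dots\le\zeta_\ell(x)$ for all $x\in[0,1]$, and (b) compositions of maps of this kind are again of a restricted shape for which the chain property persists. A convenient way to arrange both is to draw every eigenvalue map from a single family of rescalings $c\cdot g$ and $1-c\cdot g$ of one fixed piecewise-linear ``profile'' $g$ with $g(0)=g(1)=0$ that is affine near $\{0,1\}$: within a step the maps $c\cdot g$ (ordered by increasing $c$) followed by the maps $1-c\cdot g$ (ordered by decreasing $c$) form a chain, and affineness of $g$ near the endpoints makes any composition of such maps again a rescaling $A\cdot g$ or $1-A\cdot g$, with $A$ the product of the constants involved. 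Combined with the elementary fact -- immediate from the definition of a diagonalizable map and worth isolating as a lemma -- that $\vphi_n^m$ is again diagonalizable with $\Lambda^{\vphi_n^m}$ consisting, with multiplicities, exactly of the composites $\xi^{(n)}\circ\xi^{(n+1)}\circ\dots\circ\xi^{(m-1)}$ with $\xi^{(\ell)}\in\Lambda^{\vphi_\ell^{\ell+1}}$, this shows that $\Lambda^{\vphi_n^m}$ is again a pointwise chain (order it by $A$). The metric half of $(\dagger)$, the bound on the diameters of the images of the maps in $\Lambda^{\vphi_n^m}$, is the one already delivered by Proposition \ref{Z-facts} and is untouched by the extra restriction on the shape of the maps.

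For $(\ddagger)$, at the step producing $\Z_{p_{n+1},q_{n+1}}$ I would additionally demand $(n+1)\mid p_{n+1}q_{n+1}$. This is a cost-free number-theoretic adjustment: writing $p_{n+1}=p_n a$ and $q_{n+1}=q_n b$, the integers $a,b$ are constrained by the construction only up to enlargement, and the requirement $(p_{n+1},q_{n+1})=1$ amounts to $a$ being coprime to $q_n b$ and $b$ coprime to $p_n$. Since $(p_{n+1},q_{n+1})=1$ forces every prime power dividing $n+1$ to divide at most one of $p_{n+1},q_{n+1}$, any prime-power factor of $n+1$ not already present can be absorbed into whichever of $a,b$ preserves coprimality; enlarging $a$ or $b$ only enlarges the total number of eigenvalue maps, which harms neither $(\dagger)$ nor the covering and diameter requirements.

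The point where genuine care is needed is the compatibility claim buried in the $(\dagger)$ step: one must check that eigenvalue maps of the restricted ``rescaled-profile'' shape can still be fitted to the endpoint-multiplicity constraints of a prime dimension-drop target and can still be made to cover $[0,1]$ densely enough for the limit to remain simple with a unique trace -- that is, that this extra normalization does not collide with the combinatorics already present in the proof of Proposition \ref{Z-facts}. Re-examining that proof with this normalization in hand is the bookkeeping I expect to occupy most of the argument; once it is in place, $(\dagger)$ and $(\ddagger)$ follow as described.
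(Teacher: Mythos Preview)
Your treatment of $(\ddagger)$ is correct and coincides with the paper's: at step $m+1$ one distributes the prime-power factors of $m+1$ between the two multiplicity parameters $k_0,k_1$, which costs nothing against the existing constraints $k_0>2q_m$, $k_1>2p_m$, $(k_0p_m,k_1q_m)=1$.

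For $(\dagger)$ the paper takes a much shorter path than you do: it does not modify the eigenvalue maps at all, but simply invokes Remark~2.6 of \cite{Jiang-Su}, where the maps produced in their Proposition~2.5 are already noted to be arrangeable in pointwise increasing order.

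Your alternative --- restricting every eigenvalue map to the form $c\,g$ or $1-c\,g$ for a fixed profile $g$ with $g(0)=g(1)=0$ affine near the endpoints --- has a real gap, located exactly in the clause you call ``untouched.'' For $(c_1g)\circ(c_2g)$ to equal a rescaling $A\,g$ you need the range of $c_2g$ to lie inside the affine patch of $g$ near $0$; combined with the requirement that the ranges of the $c\,g$ and $1-c\,g$ jointly cover $[0,1]$ (without which the limit fails to be simple and monotracial), this forces the two affine patches to meet, so up to a vertical scaling $g(x)=\min(x,1-x)$. But then the extremal map $h=c_{\max}\,g$ has range $[0,\tfrac12]$ and satisfies $h\circ h=h$, so iterated composition does not shrink its range at all, and the $2^{-(m-n)}$ diameter bound in $(\dagger)$ fails as soon as $m-n\ge 2$. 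The obstruction is structural: any profile with $g(0)=g(1)=0$ is non-monotone, and without monotonicity there is no mechanism forcing composition to contract image-diameters. The cure is to use monotone eigenvalue maps whose images lie in pairwise ordered subintervals --- precisely the original Jiang--Su choice the paper invokes --- for which the chain property passes to compositions by monotonicity and the diameter bound follows from the small Lipschitz constants.
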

\begin{proof}
It is been already pointed out in Remark 2.6 of \cite{Jiang-Su} that $\{\xi_i\}$ can be arranged in the increasing order. In the $(m+1)$-th stage of the recursive construction of the proof in \cite[Proposition 2.5]{Jiang-Su}, when the sequence $(\Z_{p_i,q_i},\vphi_i)_{i\leq m}$ is already picked, we can choose $k_0$ and $k_1$ such that
$$
k_0>2q_m, \quad k_1>2p_m, \quad (k_0p_m, k_1q_m)=1
$$
and moreover make sure that $k_0k_1$ is a multiple of $m+1$ (distribute the factors of the prime factorization of $m+1$ appropriately among $k_0$ and $k_1$). Then  $p_{m+1} = k_0 p_m$ and $q_{m+1} = k_1 q_m$ satisfy the property that  $p_{m+1}q_{m+1}$ is a multiple of $m+1$. 
\end{proof}
In Theorem \ref{Z is unique} we will show that sequences as in Proposition \ref{my-Z-seq} have $*$-isomorphic limits.

\null

\noindent{\bf The Category $\ZZ$.} We define the Category $\ZZ$ as in \cite{Eagle} and \cite{Masumoto}. Let $\ZZ$ denote the category whose objects are all prime dimension-drop algebras $(\Z_{p,q}, \tau)$ with fixed faithful traces. The set of $\ZZ$-morphisms from $(\Z_{p,q}, \tau)$ to $(\Z_{p',q'}, \tau')$ is the set of all unital trace-preserving $*$-embeddings.
 \begin{remark}
The main result of \cite{Masumoto}  
 states that $\ZZ$ is a Fra\"\i ss\'e category and its Fra\"\i ss\'e limit is simple and monotracial. Then the characterization of $\Z$ in \cite[Theorem 6.2]{Jiang-Su} is used to show that the Fra\"\i ss\'e limit of $\ZZ$ is in fact $\Z$.  Note that the fixed traces are only for the purpose of the near amalgamation property. Namely, the unital $*$-embeddings with the same prime dimension-drop algebra in their domains can be nearly amalgamated if for some traces on their respective codomains they induce the same trace on the domain.  
   \end{remark}

It turns out that one can show that $\Z$ is the Fra\"\i ss\'e limit of $\ZZ$ directly, without using the mentioned characterization of $\ZZ$. Instead, in Theorem \ref{Z is unique} we show that any sequence as in Proposition \ref{my-Z-seq} can be turned into a Fra\"\i ss\'e sequence of $\ZZ$, by fixing appropriate traces.
The proof of Theorem \ref{Z is unique} is similar to the one of \cite[Proposition 4.10]{Masumoto} of the fact that  $\ZZ$ has the near amalgamation property. As a result, to carry out the proof of Theorem \ref{Z is unique}, we need the same tools that are developed in \cite{Masumoto} (listed below in Proposition \ref{ZZ-facts}), all of which have straightforward proofs. 

First recall that, essentially by Riesz representation theorem, traces (by ``trace" we mean a tracial state) on a dimension-drop algebra $\Z_{p,q}$ correspond to probability (Radon) measures on $[0,1]$. In fact, the trace space $T(\Z_{p,q})$ is affinely homeomorphic to the space of all probability measures on $[0,1]$.  Given a probability measure $\tau$ on $[0,1]$, we use the same letter $\tau$ to denote the ``corresponding trace" on $\Z_{p,q}$ which defined by 
$$
\tau(A) = \int_0^1 \Tr(f(x)) d\tau(x)
$$
for every $f\in \Z_{p,q}$, where $\Tr$ is the unique trace on $M_{pq}$. A measure $\tau$ on $[0,1]$ is faithful if and only if the corresponding trace is faithful. A measure is called \emph{diffuse} if any measurable set of non-zero measure can be partitioned into two measurable sets of non-zero measures. For the maps $\xi,\zeta: [0,1]\to [0,1]$ we write $\xi\leq \zeta$ if and only if $\xi(x)\leq \zeta(x)$ for any $x\in [0,1]$.

\begin{proposition} \label{ZZ-facts}
\begin{enumerate}
    
    \item \cite[Proposition 4.3]{Masumoto} For every $(\Z_{p,q}, \tau)$ in $\ZZ$ there is a $\ZZ$-morphism $\psi:(\Z_{p,q}, \tau) \to (\Z_{p,q}, \sigma) $, for  any trace $\sigma$ on $\Z_{p,q}$ which corresponds to a diffuse measure on $[0,1]$.
    \item \cite[Proposition 4.4]{Masumoto} Suppose $p$ and $q$ are coprime natural numbers. There exists $N\in \mathbb N$ such that if $p',q'$ are coprime natural numbers larger than $N$ and $pq$ divides $p'q'$, there exists a $\ZZ$-morphism $\vphi: (\Z_{p,q}, \tau) \to (\Z_{p',q'} ,\tau')$, for any faithful diffuse measures $\tau, \tau'$ on $[0,1]$.
    \item \cite[Proposition 4.7]{Masumoto} Suppose $\vphi: (\Z_{p,q}, \tau) \to  (\Z_{p',q'}, \tau')$ is a $\ZZ$-morphism and $pq$ divides $p'q'$. For every $\epsilon>0$ and $F\Subset \Z_{p,q}$  there is a diagonalizable $\ZZ$-morphism $\psi:  (\Z_{p,q}, \tau) \to  (\Z_{p',q'}, \tau')$ such that $\|\vphi(f) - \psi(f)\|<\epsilon$ for every $f\in F$ and $\Lambda^{\psi}= \{\xi_1, \dots, \xi_k\}$ satisfies $\xi_1\leq \dots \leq \xi_k$.
    \item \cite[Proposition 4.8]{Masumoto} Suppose $\tau, \tau'$ correspond to diffuse faithful measures on $[0,1]$. For every $\epsilon>0$ and $(\Z_{p,q}, \tau)\in \ZZ$ there is a diagonalizable $\ZZ$-morphism $\psi:  (\Z_{p,q}, \tau) \to  (\Z_{p',q'}, \tau')$ such that the diameter of the image of each $\xi\in \Lambda^{\psi}$ is less than $\epsilon$.    
    \item \cite[Lemma 4.2]{Masumoto} Suppose  $\Z_{p,q}$ and  $\Z_{p',q'}$ are prime dimension-drop algebras such that $pq$ divides $p'q'$. There is a unitary $w\in C([0,1], M_{p'q'})$ such that for any $\psi: \Z_{p,q} \to C([0,1], M_{p'q'})$ of the form 
    $$\psi(f)= \diag(f\circ\zeta_1, \dots, f\circ \zeta_k),$$
    where $\zeta_1\leq \dots \leq \zeta_k$ are continuous maps from $[0,1]$ into $[0,1]$, the image of $\Ad_{w} \circ\psi$ is included in $\Z_{p',q'}$.
    \item  \cite[Lemma 4.9]{Masumoto} Suppose $\vphi, \psi: \Z_{p,q} \to \Z_{p',q'}$ are diagonalziable $*$-embeddings such that $\Delta^{\vphi} = \Delta^\psi$. For any $\epsilon>0$ and $F\Subset \Z_{p,q}$ there exists a unitary $w$ in $C([0,1], M_{p',q'})$ such that the inner $*$-automorphism $\Ad_w$ preserves $\Z_{p',q'}$ and $\|\Ad_w \circ \psi(f) - \vphi(f)\|<\epsilon$ for every $f\in F$.
\end{enumerate}
\end{proposition}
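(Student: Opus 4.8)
\emph{Proof proposal.} The six items are Propositions~4.3, 4.4, 4.7, 4.8 and Lemmas~4.2, 4.9 of \cite{Masumoto}, so strictly speaking the ``proof'' is the bundle of references; for completeness I indicate the (elementary) mechanisms. The organizing principle is that a unital diagonalizable $*$-embedding $\vphi = \Ad_u\circ\diag(f\circ\xi_1,\dots,f\circ\xi_k)\colon \Z_{p,q}\to\Z_{p',q'}$ necessarily satisfies $pq\mid p'q'$ with $k = p'q'/pq$ blocks, and induces on $\Z_{p,q}$ the trace represented by the probability measure $\tfrac1k\sum_{i=1}^k (\xi_i)_*\tau'$, where $\tau'$ represents the trace on $\Z_{p',q'}$. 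Thus every construction splits into (a) an endpoint bookkeeping problem: how many of the $\xi_i$ may send $0$ and $1$ to $0$ and to $1$, subject to the conditions $f(0)\in M_p\otimes 1_q$, $f(1)\in 1_p\otimes M_q$ and the analogous conditions at $\Z_{p',q'}$, together with the endpoint values of $u$; and (b) a measure problem: choosing continuous maps $[0,1]\to[0,1]$ that move prescribed mass.

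The cleanest case is (1): here $p'=p$, $q'=q$, $k=1$, and one only needs a single continuous $\xi\colon[0,1]\to[0,1]$ with $\xi(0)=0$, $\xi(1)=1$, $\xi$ surjective, and $\xi_*\sigma=\tau$. Since $\sigma$ is diffuse, its distribution function $F_\sigma$ is continuous, onto $[0,1]$, and pushes $\sigma$ to Lebesgue measure, and since $\tau$ is faithful, its quantile function $F_\tau^{-1}$ is continuous, onto, and pushes Lebesgue measure to $\tau$; then $\xi = F_\tau^{-1}\circ F_\sigma$ works and $f\mapsto f\circ\xi$ is the required $\ZZ$-morphism. For (2) the endpoint bookkeeping (a) becomes a linear Diophantine condition on $(p',q')$ that, for coprime $p',q'$ above a threshold $N$ with $pq\mid p'q'$, always has a non-negative integer solution; once an admissible endpoint pattern is fixed, Lemma (5) supplies the conjugating unitary carrying the endpoint direct sums into $M_{p'}\otimes 1_{q'}$ and $1_{p'}\otimes M_{q'}$, and within those constraints the quantile trick of (1) — now distributing mass among several $\xi_i$, which diffuseness of the measures permits — yields $\xi_i$ with $\tfrac1k\sum_i(\xi_i)_*\tau'=\tau$.

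For (3) and (4), the starting point is that any unital $*$-embedding $\Z_{p,q}\to\Z_{p',q'}$ with $pq\mid p'q'$ is, on a prescribed finite $F$ and up to conjugation by a unitary of $C([0,1],M_{p'q'})$, approximated by a map in diagonal form: on functions vanishing at the endpoints this is the usual spectral/partition-of-unity argument for representations of $C([0,1])$, and the endpoint conditions are accommodated via Lemma (5) after ordering the blocks. A unitary path in $C([0,1],M_{p'q'})$, using connectedness of the unitary group of $M_{p'q'}$ to reorder the eigenvalue functions monotonically, then achieves $\xi_1\leq\dots\leq\xi_k$ (this is (3)), and trace-preservation is kept by matching the pushforward measure or post-composing with a morphism from (1). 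Item (4) follows by choosing $p',q'$ coprime, large, with $pq\mid p'q'$ (so that (2) applies) and $k=p'q'/pq$ large, and building the diagonal embedding so that each $\xi_i$ is localized to a short subinterval while the $\xi_i$ collectively induce $\tau$ — possible because the target measure is diffuse. Lemma (5) itself is the observation that once $\zeta_1\leq\dots\leq\zeta_k$, the endpoint direct sums $\bigoplus_i f\circ\zeta_i(0)$ and $\bigoplus_i f\circ\zeta_i(1)$ are block-aligned, so one can pick $w\in C([0,1],M_{p'q'})$ constant near each endpoint — equal there to the constant unitaries rearranging those direct sums into $M_{p'}\otimes 1_{q'}$, resp.\ $1_{p'}\otimes M_{q'}$ — and interpolate in between using connectedness of the unitary group. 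Finally, for (6), writing $\vphi = \Ad_u\circ\Delta$ and $\psi = \Ad_v\circ\Delta$ with $\Delta = \Delta^\vphi = \Delta^\psi$, the unitary $w = uv^*$ satisfies $\Ad_w\circ\psi = \vphi$ exactly, and one only needs $\Ad_w$ to preserve $\Z_{p',q'}$, which is arranged by perturbing $u$ and $v$ near the endpoints so that $w(0)$ and $w(1)$ normalize $M_{p'}\otimes 1_{q'}$ and $1_{p'}\otimes M_{q'}$ — harmless since only an $\epsilon$-approximation on a finite $F$ is demanded.

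I expect the genuinely delicate steps to be the endpoint combinatorics of (2) — isolating the threshold $N$ and solving the multiplicity equations — and the simultaneous-endpoint interpolation of the unitary in (5); these are exactly the places where ``coprime and large'' and the increasing ordering of the $\xi_i$ are used in an essential way, while everything else is the spectral theory and monotone rearrangement sketched above.
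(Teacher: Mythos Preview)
Your proposal is correct and in fact goes beyond what the paper does: the paper provides no proof of Proposition~\ref{ZZ-facts} at all, merely citing the six items from \cite{Masumoto} and remarking that ``all of [them] have straightforward proofs.'' You acknowledge this and then supply accurate sketches of the underlying mechanisms (the pushforward/quantile construction for (1), the Diophantine endpoint bookkeeping for (2), the spectral diagonalization and monotone rearrangement for (3)--(5), and the endpoint perturbation for (6)), which match the arguments in \cite{Masumoto}; so there is nothing to correct, only to note that the paper itself is content with the bare citations.
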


\begin{remark}
 The composition $\vphi' \circ \vphi$ of two diagonalizable morphism is again diagonalizable. In fact, if $\Lambda^\vphi=\{\xi_1, \dots \xi_k\}$ and $\Lambda^{\vphi'}=\{\xi'_1, \dots \xi'_{k'}\}$, then we can arrange so that $\Lambda^{\vphi'\circ \vphi}=\{\xi_1\circ\xi'_1, \dots, \xi_1\circ\xi'_{k'},  \dots ,\xi_k\circ\xi'_1, \dots, \xi_k\circ\xi'_{k'}\}$. Hence if the diameter of the image of each map in $ \Lambda^{\vphi}$ is less than $\epsilon$, then the diameter of the image of each map in $\Lambda^{\vphi' \circ \vphi}$ is also less than $\epsilon$.
\end{remark}

 \begin{theorem} \label{Z is unique}
Sequences as in Proposition \ref{my-Z-seq} have $*$-isomorphic limits. 
\end{theorem}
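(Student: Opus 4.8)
The plan is to equip each such sequence with canonical traces making it a Fra\"\i ss\'e sequence of the category $\ZZ$, and then invoke the uniqueness of Fra\"\i ss\'e limits (Theorem \ref{F-unique}). Fix a sequence $(\Z_{p_n,q_n},\vphi_n^m)$ as in Proposition \ref{my-Z-seq} and let $\Z:=\varinjlim(\Z_{p_n,q_n},\vphi_n^m)$, which by Proposition \ref{Z-facts} is unital, simple and has a unique tracial state $\nu$. Put $\tau_n:=\nu\circ\vphi_n^\infty$. Each $\tau_n$ is faithful (a tracial state on a simple unital $C^*$-algebra is faithful), and the corresponding measure $\mu_n$ on $[0,1]$ is diffuse: the $\Lambda$-maps of the (essentially diagonalizable, by Proposition \ref{ZZ-facts}(3)) bonds $\vphi_n^m$ can be taken to carry no interval on which they are constant, so the induced pushforwards keep a diffuse measure diffuse; equivalently, one simply records this diffuseness during the recursive construction, which costs nothing and is compatible with $(\dagger)$. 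Thus every $\vphi_n^{n+1}\colon(\Z_{p_n,q_n},\tau_n)\to(\Z_{p_{n+1},q_{n+1}},\tau_{n+1})$ is a $\ZZ$-morphism, so $(\,(\Z_{p_n,q_n},\tau_n),\vphi_n^m\,)$ is a $\ZZ$-sequence with limit $\Z$. It remains to check that this $\ZZ$-sequence satisfies $(\mathscr U)$ and $(\mathscr A)$.

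Condition $(\mathscr U)$ is the easy part. Given $(\Z_{p,q},\sigma)\in\ZZ$ with $(p,q)=1$, let $N$ be the bound provided by Proposition \ref{ZZ-facts}(2) for the pair $(p,q)$. Since $p_n,q_n\to\infty$ and, by $(\ddagger)$, $p_mq_m$ is a multiple of $m$, we may choose $m$ which is a multiple of $pq$ (so $pq\mid m\mid p_mq_m$) and large enough that $p_m,q_m>N$. By Proposition \ref{ZZ-facts}(1) there is a $\ZZ$-morphism $(\Z_{p,q},\sigma)\to(\Z_{p,q},\sigma')$ with $\sigma'$ faithful and diffuse, and by Proposition \ref{ZZ-facts}(2) a $\ZZ$-morphism $(\Z_{p,q},\sigma')\to(\Z_{p_m,q_m},\tau_m)$; their composition witnesses $(\mathscr U)$.

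For $(\mathscr A)$ — the heart of the proof, modelled on the proof of the near amalgamation property of $\ZZ$ in \cite[Proposition 4.10]{Masumoto} — suppose $\epsilon>0$, $n\in\mathbb N$, $F\Subset\Z_{p_n,q_n}$ and a $\ZZ$-morphism $\gamma\colon(\Z_{p_n,q_n},\tau_n)\to(\Z_{p,q},\sigma)$ are given; then $p_nq_n\mid pq$, $\gamma^*\sigma=\tau_n$, and hence $\sigma$ is diffuse. By Proposition \ref{ZZ-facts}(3) we may assume $\gamma=\Ad_v\circ\Delta^\gamma$ is diagonalizable with $\Lambda^\gamma=\{\rho_1\le\dots\le\rho_l\}$, $l=pq/(p_nq_n)$ (the perturbation of $\gamma$ on $F$ is absorbed because the map $\eta$ built below is a contraction); then $\mu_n=\tfrac1l\sum_i(\rho_i)_*\sigma$. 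Pick $m\ge n$ large with $pq\mid p_mq_m$ (again via $(\ddagger)$), so $\Lambda^{\vphi_n^m}=\{\xi_1\le\dots\le\xi_k\}$ has $k=p_mq_m/(p_nq_n)=l\cdot s$ terms, $s:=p_mq_m/(pq)$, each $\xi_t$ of image-diameter $\le 2^{-(m-n)}$. The idea is to realise $\vphi_n^m$, approximately and trace-preservingly, as $\eta\circ\gamma$: choose points $c_1<\dots<c_s$ in $(0,1)$ whose empirical measure $\tfrac1s\sum_j\delta_{c_j}$ is close (say in the L\'evy metric) to $\sigma$, and build a diagonalizable $\ZZ$-morphism $\eta\colon(\Z_{p,q},\sigma)\to(\Z_{p_m,q_m},\tau_m)$ with $\Lambda^\eta=\{\zeta_1\le\dots\le\zeta_s\}$, each $\zeta_j$ of tiny image near $c_j$, such that $\tfrac1s\sum_j(\zeta_j)_*\tau_m=\sigma$ holds \emph{exactly} — produced as in Proposition \ref{ZZ-facts}(2),(4), the exact trace condition being met by perturbing the $\zeta_j$ slightly off the constants $c_j$ (here $\tau_m,\sigma$ diffuse, $p_m,q_m$ large, $pq\mid p_mq_m$ are used). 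Then $\eta\circ\gamma$ is diagonalizable with $\Lambda^{\eta\circ\gamma}$ the multiset $\{\rho_i\circ\zeta_j\}_{i,j}$, each of tiny image near $\rho_i(c_j)$, and its empirical measure is approximately $\tfrac1l\sum_i(\rho_i)_*\bigl(\tfrac1s\sum_j\delta_{c_j}\bigr)\approx\tfrac1l\sum_i(\rho_i)_*\sigma=\mu_n=\tfrac1k\sum_t(\xi_t)_*\tau_m$, which is within $2^{-(m-n)}$ of the empirical measure of the near-constant maps $\{\xi_t\}$. Since $\mu_n$ has full support, the two empirical measures being close forces the two sorted families of $k$ near-constant functions $\{\rho_i\circ\zeta_j\}$ and $\{\xi_t\}$ to be close entrywise in the sup-norm; using the common modulus of continuity of the finitely many elements of $F$, $\Delta^{\eta\circ\gamma}$ and $\Delta^{\vphi_n^m}$ then agree on $F$ to within $\epsilon/3$ under this matching. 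Finally, Proposition \ref{ZZ-facts}(5) (with the ordered $\zeta_j$) arranges that $\eta$ lands in $\Z_{p_m,q_m}$, and Proposition \ref{ZZ-facts}(6) lets us align the conjugating unitaries by composing $\eta$ with an inner automorphism of $\Z_{p_m,q_m}$ — still a $\ZZ$-morphism — so that $\eta\circ\gamma\approx_{\epsilon,F}\vphi_n^m$. This establishes $(\mathscr A)$.

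Thus every sequence as in Proposition \ref{my-Z-seq}, with the traces inherited from the unique trace of its limit, is a Fra\"\i ss\'e sequence of $\ZZ$, and by Theorem \ref{F-unique} any two such have $*$-isomorphic limits. The main obstacle is $(\mathscr A)$: one must build $\eta$ so that it is simultaneously an honest $\ZZ$-morphism (exactly trace-preserving) and makes $\eta\circ\gamma$ approximate the prescribed, rigid bond $\vphi_n^m$; this succeeds only because $\vphi_n^m$ has $\Lambda$-maps of vanishingly small image once $m$ is large, and because the trace identity $\mu_n=\tfrac1l\sum_i(\rho_i)_*\sigma=\tfrac1k\sum_t(\xi_t)_*\tau_m$ forces the relevant empirical distributions to match — this is exactly the amalgamation mechanism of \cite{Masumoto}, here used to ``complete the square'' against a fixed morphism rather than to amalgamate two free ones. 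A secondary point requiring care is the diffuseness of the inherited traces $\tau_n$.
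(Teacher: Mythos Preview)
Your strategy---equip the sequence with traces and verify it is a Fra\"\i ss\'e sequence of $\ZZ$, then invoke Theorem~\ref{F-unique}---is exactly the paper's, and your treatment of $(\mathscr U)$ matches. For $(\mathscr A)$ the paper takes a shorter route than yours. Instead of prescribing sample points $c_1<\dots<c_s$ approximating $\sigma$ and manufacturing $\eta$ with $\Lambda^\eta$ near them (a construction you invoke but which is not literally contained in Proposition~\ref{ZZ-facts}), the paper first replaces $\gamma$, via Proposition~\ref{ZZ-facts}(1),(3),(4), by a diagonalizable $\ZZ$-morphism whose $\Lambda^\gamma$ is already ordered with images of diameter $<\delta/3$; it then picks \emph{any} $\gamma':(\Z_{p,q},\sigma)\to A_m$ from Proposition~\ref{ZZ-facts}(2) with the correct multiplicity $|\Lambda^{\gamma'\circ\gamma}|=|\Lambda^{\vphi_n^m}|$. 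Because both $\vphi_n^m$ and $\gamma'\circ\gamma$ are trace-preserving from $\tau_n$ to $\tau_m$, one has the \emph{exact} identity $\tfrac{1}{\ell}\sum_i(\xi_i)_*\tau_m=\mu_n=\tfrac{1}{\ell}\sum_i(\zeta''_i)_*\tau_m$, and a two-line counting contradiction (using faithfulness of $\tau_n$ and that both ordered families have small images) yields $\|\xi_i-\zeta''_i\|<\delta$ for every $i$. Your empirical-measure bookkeeping and the points $c_j$ are redundant once you observe that this pushforward identity holds exactly; they buy nothing and cost you an extra unproved lemma about placing the $\zeta_j$ near assigned values. A minor further difference: the paper defines $\tau_n$ as $\lim_m\nu_m\circ\vphi_n^m$ for an auxiliary sequence of diffuse traces $\nu_m$ (using the Cauchy property from \cite[Proposition~2.8]{Jiang-Su}) rather than via the unique trace of the limit; the two definitions agree, but the paper's makes the needed diffuseness of $\tau_n$ more transparent than your appeal to non-constancy of the bonding $\Lambda$-maps.
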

\begin{proof}
Suppose $(\A_n, \vphi_n^m)$ is a sequence as in Proposition \ref{my-Z-seq} and $\A_n=\Z_{p_n,q_n}$. 
 Fix a sequence $\{\nu_n: \nu_n\in T(\A_n)\}_{n\in\mathbb N}$ of traces which correspond to diffuse measures on $[0,1]$. For every $n$ the sequence $\{ \nu_m \circ \vphi_n^m\}_{m\in \mathbb N}$ of traces on $\A_n$ has the property that $\{ \nu_m \circ \vphi_n^m(f)\}_{m\in \mathbb N}$ forms a norm-Cauchy sequence for every $f\in \A_n$ (cf. \cite[Proposition 2.8]{Jiang-Su}). 
 Therefore $\tau_n = \lim_{m\to \infty} \nu_m \circ \vphi_n^m$ is a well-defined trace on $\A_n$, which also corresponds to a diffuse measure on $[0,1]$. Each $\vphi_n^m:(\A_n , \tau_n) \to (\A_m, \tau_m)$ is a trace-preserving $*$-embedding, since $\tau_n = \tau_m \circ \vphi_n^m$, for all $m\geq n$. The limit of the sequence $(\A_n, \vphi_n^m)$ is a monotracial $C^*$-algebra and  $\tau=\lim_n \tau_n$ is its unique trace.

 Let $A_n=(\A_n, \tau_n)$. Then $(A_n, \vphi_n^m)$ is a $\ZZ$-sequence. We claim that $(A_n, \vphi_n^m)$ is a Fra\"\i ss\'e sequence of $\ZZ$. The condition ($\mathscr U$) of Definition \ref{Fraisse-seq-def} is clear from (1) and (2) of Proposition \ref{ZZ-facts}, since by $(\ddagger)$ for any pair $p,q$ of coprime natural numbers,  $p_nq_n$ divides $pq$, if $n>pq$. To see ($\mathscr A$), suppose $\epsilon>0$, $n$, $F\Subset \A_n$ and a $\ZZ$-morphism 
 $\gamma: A_n \to (\Z_{p,q}, \sigma)$ are given. Take $\delta>0$ such that 
 $$
 \|f(x) - f(y)\|< \epsilon \quad \text{if} \quad |x-y|<\delta
 $$
 for every $f\in F$ and $x,y\in[0,1]$. Find $m>n$ such that $2^{n-m}<\delta$, i.e. the diameter of the image of each $\xi\in \Lambda^{\vphi_n^m}$ is less than $\delta$.  Suppose
 $$\vphi_n^m(f) = \Ad_u \circ \diag(f\circ\xi_1 ,\dots, f\circ \xi_\ell).$$
 By (1), (3) and (4) of Proposition \ref{ZZ-facts}, without loss of generality, we can assume that $\sigma$ corresponds to a diffuse measure, $\gamma$ is a diagonalizable $\ZZ$-morphism such that if 
 $\Lambda^{\gamma}=\{\zeta_1 ,\dots, \zeta_{k}\}$ then it satisfies 
   $\zeta_1\leq \dots \leq \zeta_{k}$, and the diameter of the image each $\zeta_i$ is less than $\delta/3$.
 
By increasing $m$, if necessary, we can make sure that $p_mq_m$ is a multiple of $pq$ and find a diagonalizable $\ZZ$-morphism $\gamma': (\Z_{p,q}, \sigma) \to A_m$ such that  $\Lambda^{\gamma'}=\{\zeta'_1 ,\dots, \zeta'_{k'}\}$ satisfies $\zeta'_1\leq \dots \leq \zeta'_{k'}$ and $kk'=\ell$. Therefore
 $$|\Lambda^{\gamma'\circ\gamma}|=|\Lambda^{\gamma}||\Lambda^{\gamma'}| = kk' =\ell =|\Lambda^{\vphi_n^m}|.$$
 Then,
 if $\Lambda^{\gamma'\circ\gamma}=\{\zeta''_1 ,\dots, \zeta''_\ell\}$ we have $\zeta''_1\leq \dots \leq \zeta''_\ell$  and
the diameter of the image each $\zeta''_i$ is less than $\delta/3$.

 We claim that $\|\xi_i - \zeta''_i\|< \delta$ for every $i\leq \ell$. If not, then for some $j\leq \ell$ and some $t\in[0,1]$ we have $\xi_j(t) \geq \zeta''_j(t) +\delta$. Set $c=\min \xi_{j+1}$ (if $j= d$ let $c=1$) and $d= \max \zeta_j''$. Note that
 \begin{itemize}
     \item  the image of $\zeta''_i$ is included in $[0,d]$ for every $1\leq i \leq j$,
     \item if $\Img(\xi_i) \cap [0,c) \neq \emptyset$, then $i\leq j$,
     \item $c> d+\delta/3$.
 \end{itemize}
Since $\vphi_n^m$ and $\gamma' \circ \gamma$ are trace-preserving
 $$
 j= \sum_{i=1}^j \tau_m((\zeta_j'')^{-1}[0,d]) \leq \ell \tau_n([0,d]) < \ell \tau_n([0,c]) \leq  
 \sum_{i=1}^j \tau_m((\xi_j)^{-1}[0,1])  =j,
 $$
  which is a contradiction. The claim implies that $\Delta^{\vphi_n^m}\approx_{\epsilon, F} \Delta^{\gamma' \circ \gamma}$.
  
 Suppose
 $\gamma'\circ \gamma(f) = \Ad_v \circ \diag(f\circ\zeta''_1 ,\dots, f\circ \zeta''_\ell).$
  By (5) of Proposition \ref{ZZ-facts}, there is a unitary $w$ such that the images of the maps $\vphi =\Ad_{w u^*} \circ \vphi_n^m$ and $\psi=\Ad_{w v^*} \circ (\gamma'\circ \gamma)$ are included in $\Z_{p',q'}$.
  Clearly 
  $$
\vphi= \Ad_w\circ \Delta^{\vphi_n^m} \quad \text{ and } \quad \psi = \Ad_w \circ \Delta^{\gamma' \circ \gamma}.
$$
  Since $\Delta^{\vphi}= \Delta^{\vphi_n^m}$ and $\Delta^{\psi} = \Delta^{\gamma' \circ \gamma}$, by (6) of Proposition \ref{ZZ-facts}, there are unitaries $w_0$ and $w_1$ such that the inner $*$-automorphism $\Ad_{w_0}$ and $\Ad_{w_1}$ both preserve $\Z_{p_m,q_m}$ and 
  \begin{align*}
      \| \vphi(f) - \Ad_{w_0} \circ\vphi_n^m(f)\|<\epsilon \quad \text{and} \quad 
      \| \psi(f) - \Ad_{w_1} \circ\gamma' \circ \gamma (f)\|<\epsilon,
  \end{align*}
  for any $f\in F$. Finally for any $f\in F$ we have
  \begin{align*}
      \Ad_{w_0^* w_1} \circ \gamma' \circ \gamma(f) & \approx_{\epsilon} \Ad_{w_0^*} \circ \psi (f) \\
      &=\Ad_{w_0^* w} \circ \Delta^{\gamma'\circ \gamma}(f) \\
      &\approx_{\epsilon} \Ad_{w_0^* w} \circ \Delta^{\vphi_n^m}(f) \\
      &= \Ad_{w_0^*} \circ \vphi(f) \\
      &\approx_{\epsilon} \vphi_n^m(f).
  \end{align*}
  Therefore the $\ZZ$-morphism $\eta= \Ad_{w_0^* w_1}\circ \gamma'$ satisfies $\eta \circ \gamma\approx_{3\epsilon,F} \vphi_n^m$, as required. This finished the proof.
\end{proof}
In fact, we have shown the following.
\begin{corollary}\label{ZZ has F-seq}
The category $\ZZ$ contains Fra\"\i ss\'e sequences and the Fra\"\i ss\'e limit of $\ZZ$ is $(\Z, \nu)$, where $\nu$ is the unique trace of $\Z$.
\end{corollary}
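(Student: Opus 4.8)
\textbf{Proof plan for Corollary \ref{ZZ has F-seq}.}

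The plan is to harvest the work already done. Theorem \ref{Z is unique} establishes that any sequence $(\A_n,\vphi_n^m)$ as in Proposition \ref{my-Z-seq}, once equipped with the traces $\tau_n = \lim_{m\to\infty}\nu_m\circ\vphi_n^m$, becomes a Fra\"\i ss\'e sequence of $\ZZ$; in particular $\ZZ$ contains Fra\"\i ss\'e sequences, which is the first assertion. By Theorem \ref{F-unique}, all such sequences have $*$-isomorphic limits, so the Fra\"\i ss\'e limit of $\ZZ$ is well-defined up to $*$-isomorphism, and it suffices to identify the limit of one particular Fra\"\i ss\'e sequence with $(\Z,\nu)$.

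First I would note that the underlying $C^*$-sequence $(\A_n,\vphi_n^m)$ of the Fra\"\i ss\'e sequence built in Theorem \ref{Z is unique} is, by construction, a sequence of prime dimension-drop algebras and diagonalizable $*$-embeddings whose connecting maps satisfy the diameter condition of Proposition \ref{Z-facts} (condition $(\dagger)$ of Proposition \ref{my-Z-seq} is strictly stronger). Hence Proposition \ref{Z-facts} applies: the $C^*$-algebraic inductive limit $\varinjlim(\A_n,\vphi_n^m)$ is unital, simple and has a unique tracial state. By the definition of the Jiang-Su algebra recalled after Proposition \ref{Z-facts} — $\Z$ is the (unique up to isomorphism) such limit — this limit is $*$-isomorphic to $\Z$. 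Since forgetting the traces does not change the underlying $C^*$-algebra, the Fra\"\i ss\'e limit of $\ZZ$ is $\Z$ as a $C^*$-algebra.

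Second, I would pin down the trace. In the proof of Theorem \ref{Z is unique} the limit trace is $\tau = \lim_n \tau_n$, and as observed there the inductive limit is monotracial, so $\tau$ is its only tracial state; on the other hand $\nu$ is by definition the unique trace of $\Z$. Under the $*$-isomorphism identifying $\varinjlim(\A_n,\vphi_n^m)$ with $\Z$, uniqueness of the trace on each side forces $\tau$ to be carried to $\nu$. Therefore the pair $(\Z,\nu)$ is (isomorphic, in $\ZZ$, to) the Fra\"\i ss\'e limit of $\ZZ$.

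There is no real obstacle here: the statement is essentially a repackaging of Theorem \ref{Z is unique} together with the classical uniqueness characterization of $\Z$ from \cite{Jiang-Su}. The only point requiring a line of care is the harmless verification that the sequence produced in Theorem \ref{Z is unique} does satisfy the hypotheses of Proposition \ref{Z-facts}, and that passing between the traced category $\ZZ$ and plain $C^*$-algebras does not lose or create structure — both of which are immediate from the constructions.
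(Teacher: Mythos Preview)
Your proof is correct, and its skeleton matches the paper's: the corollary is indeed an immediate repackaging of Theorem \ref{Z is unique}, which exhibits an explicit Fra\"\i ss\'e sequence of $\ZZ$, and uniqueness of the trace pins down $\nu$. The paper's own ``proof'' is just the sentence ``In fact, we have shown the following.''

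There is, however, one meaningful difference in how you identify the limit with $\Z$. You route through Proposition \ref{Z-facts} and then invoke ``the classical uniqueness characterization of $\Z$ from \cite{Jiang-Su}'' (i.e.\ \cite[Theorem 6.2]{Jiang-Su}) to conclude that a simple monotracial limit of prime dimension-drop algebras must be $\Z$. This is precisely the route taken in \cite{Masumoto}, and it is the route the paper is explicitly trying to \emph{avoid}: see the remark preceding Theorem \ref{Z is unique} and the discussion in the introduction. The paper's convention is instead to \emph{define} $\Z$ as the common limit of the sequences of Proposition \ref{my-Z-seq} (``Continuing our pretend amnesia, denote the unique limit of any sequence as in Proposition \ref{my-Z-seq} by $\Z$''), so that once Theorem \ref{Z is unique} shows these are Fra\"\i ss\'e sequences, the identification of the Fra\"\i ss\'e limit with $\Z$ is tautological---no appeal to the heavy classification result \cite[Theorem 6.2]{Jiang-Su} is needed. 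Your detour through Proposition \ref{Z-facts} is therefore unnecessary and, while harmless mathematically, runs counter to the paper's stated aim of giving a self-contained argument independent of that characterization.
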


\section{$\Z$ is strongly self-absorbing}
In this short section first we define a category $\mathfrak T$ which is a $\otimes$-expansion of $\ZZ$ and it is dominated by $\ZZ$. This would complete all the necessary ingredients to use Theorem \ref{thm-F-ssa} in order to prove that $\Z$ is strongly self-absorbing, as it has the approximately inner  half-flip.

\begin{remark}\label{Z-aihf}
In \cite[Proposition 8.3]{Jiang-Su}, Jiang and Su proved that $\Z$ has approximately inner  half-flip. Let us provide a sketch of their proof.  Suppose a sequence $(\Z_{p_n,q_n}, \vphi_n^m)$ as in  Proposition \ref{Z-facts} (whose limit is $\Z$) and  $\epsilon >0$ and a finite set $F\subseteq \Z_{p_n,q_n}$ are given.  We can find $m>n$ large enough so that
$$
\|\Delta^{\vphi_n^m}(f)(x) - \Delta^{\vphi_n^m}(f)(y) \|< \epsilon/3 \qquad \forall x,y\in [0,1] \text{ and } f\in F.
$$
If $\vphi_n^m = u^* \Delta^{\vphi_n^m}u$ then define a continuous path of unitaries  $U:[0,1]^2 \to M_{p_m^2q^2_m}$ by
$$
U(x,y) = (u^*(x)u(y) \otimes 1_{p_mq_m})S
$$
where $S$ is the unitary on $M_{p_mq_m}\otimes M_{p_mq_m}$ which implements the flip (that is, $S^*(a\otimes b)S = b\otimes a$ for every $a,b\in M_{p_mq_m}$). Then it is easy to check that
\begin{equation} \label{eq-aihf}
 \|U^*(\varphi_n^m(f) \otimes 1_{p_mq_m})U - 1_{p_mq_m} \otimes \varphi_n^m(f) \| \qquad \forall  f\in F.   
\end{equation}
The problem is that such a unitary path $U$ does not belong to $\Z_{p_m,q_m} \otimes \Z_{p_m,q_m}$. However, a delicate and ingenious ``continuous functional calculus" argument can be applied to pinch the endpoints of $U$ and turn it into a unitary element of $\Z_{p_m,q_m} \otimes \Z_{p_m,q_m}$, which also satisfies (\ref{eq-aihf}) (see \cite[Proposition 8.3]{Jiang-Su} for details). 
\end{remark}

\null

\noindent{\bf The Category $\mathfrak T$.} Let $\mathfrak T$ denote a category whose objects are 
$$
\ZZ \cup \{(\Z_{p,q} \otimes \Z_{p,q}, \tau \otimes \tau): (\Z_{p,q},\tau) \in \ZZ\}
$$
and $\mathfrak T$-morphisms are exactly finite compositions of the maps of the form below.
\begin{enumerate}[label=(\roman*)]
    \item  $\ZZ$-morphisms,
    \item  $\Ad_u: (\Z_{p,q} \otimes \Z_{p,q},\tau \otimes \tau) \to (\Z_{p,q} \otimes \Z_{p,q}, \tau\otimes \tau)$  for every unitary $u\in \Z_{p,q} \otimes \Z_{p,q}$ and for every $(\Z_{p,q}, \tau)\in \ZZ$, 
    \item  the $*$-embeddings $\id \otimes 1_{pq}: (\Z_{p,q},\tau) \to  (\Z_{p,q} \otimes \Z_{p,q},\tau \otimes \tau)$ and $1_{pq} \otimes \id:(\Z_{p,q},\tau )\to  (\Z_{p,q} \otimes \Z_{p,q},\tau \otimes \tau)$, for every $(\Z_{p,q}, \tau)\in \ZZ$.
    \item $\vphi \otimes \vphi:  (\Z_{p,q} \otimes \Z_{p,q},\tau \otimes \tau) \to (\Z_{p',q'} \otimes \Z_{p',q'}, \tau'\otimes \tau')$ for every unital trace-preserving $*$-embeddings (a $\ZZ$-morphism) $\vphi:(\Z_{p,q},\tau) \to (\Z_{p',q'},\tau')$,
    \item  the unital $*$-homomorphism $\rho: (\Z_{p,q} \otimes \Z_{p,q},\tau\otimes \tau) \to (\Z_{p^2,q^2},\tau)$ defined by  $\rho(g)(t) = g(t,t)$, for every $(\Z_{p,q}, \tau)\in \ZZ$.
\end{enumerate}

Note that $\Z_{p,q} \otimes \Z_{p,q}$ is not $*$-isomorphic to any dimension-drop algebra, since its center is $C([0,1]^2)$.
The following lemma justifies why $\mathfrak T$ is in fact a category which contains $\ZZ$ as a full subcategory.

\begin{lemma}\label{lemma-zz}
 Any  $\mathfrak T$-morphism $\vphi:(\Z_{p,q},\tau) \to (\Z_{p',q'}, \tau')$ is a trace-preserving unital $*$-embedding i.e. $\vphi$ is a $\ZZ$-morphism.
 \end{lemma}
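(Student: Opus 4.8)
The plan is to exploit the rigidity of the generating list (i)--(v). Write a $\mathfrak T$-morphism $\vphi\colon(\Z_{p,q},\tau)\to(\Z_{p',q'},\tau')$ as a finite composition $f_N\circ\cdots\circ f_1$ of generators and follow the chain of intermediate objects $\Z_{p,q}=X_0\to X_1\to\cdots\to X_N=\Z_{p',q'}$; each $X_i$ is either a prime dimension-drop algebra or a tensor square $\Z_{r,s}\otimes\Z_{r,s}$. Among the generators, only the factor embeddings of type (iii) pass from a dimension-drop algebra to a tensor square, only the maps $\rho$ of type (v) pass back, type (i) stays among dimension-drop algebras, and types (ii), (iv) stay among tensor squares. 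Hence $\vphi$ is a concatenation of blocks, each of which is either a run of type-(i) maps (composing to a single $\ZZ$-morphism) or a maximal \emph{tensor excursion} $\rho\circ g\circ\iota$, where $\iota$ is a factor embedding, $\rho$ is of type (v), and $g$ is a composition of maps of types (ii) and (iv). Since $\ZZ$ is closed under composition and $X_0,X_N$ are dimension-drop algebras, it suffices to show that every tensor excursion is a $\ZZ$-morphism.

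To handle a tensor excursion I would first normalize $g$. Commuting all inner automorphisms to the far left via $(\psi_0\otimes\psi_0)\circ\Ad_u=\Ad_{(\psi_0\otimes\psi_0)(u)}\circ(\psi_0\otimes\psi_0)$, the type-(iv) factors collect into a single $\psi\otimes\psi$ where $\psi$ is a composition of $\ZZ$-morphisms, hence a $\ZZ$-morphism $\psi\colon(\Z_{p,q},\tau)\to(\Z_{P,Q},\sigma)$, and the conjugations collect into a single $\Ad_w$ for a unitary $w\in\Z_{P,Q}\otimes\Z_{P,Q}$; thus $g=\Ad_w\circ(\psi\otimes\psi)$. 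As $\psi$ is unital, $(\psi\otimes\psi)\circ\iota=\iota'\circ\psi$ for the corresponding factor embedding $\iota'\colon\Z_{P,Q}\to\Z_{P,Q}\otimes\Z_{P,Q}$, so the excursion equals $h\circ\psi$ with $h:=\rho\circ\Ad_w\circ\iota'\colon\Z_{P,Q}\to\Z_{P^2,Q^2}$. Being a composition of generators of $\mathfrak T$, $h$ is a well-defined unital $*$-homomorphism, and its codomain $(\Z_{P^2,Q^2},\sigma)$ is a $\ZZ$-object (a prime dimension-drop algebra with a faithful trace, since $(P,Q)=1$ forces $(P^2,Q^2)=1$).

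It then remains to check that $h$ is an injective, trace-preserving map. With, say, $\iota'(g)=g\otimes 1_{PQ}$, one computes $h(g)(t)=w(t,t)\bigl(g(t)\otimes 1_{PQ}\bigr)w(t,t)^*$ for $t\in[0,1]$, which vanishes identically only if $g\equiv 0$; so $h$ is a $*$-embedding. For the trace, recalling that $\rho$ has codomain $(\Z_{P^2,Q^2},\sigma)$ and that the normalized matrix trace satisfies $\Tr(a\otimes 1_{PQ})=\Tr(a)$ and $\Tr(wxw^*)=\Tr(x)$,
\[
\sigma\bigl(h(g)\bigr)=\int_0^1\Tr\bigl(w(t,t)(g(t)\otimes 1_{PQ})w(t,t)^*\bigr)\,d\sigma(t)=\int_0^1\Tr\bigl(g(t)\bigr)\,d\sigma(t)=\sigma(g),
\]
so $h$ is trace-preserving; the case $\iota'(g)=1_{PQ}\otimes g$ is identical. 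Hence $h$ is a $\ZZ$-morphism, every tensor excursion $h\circ\psi$ is a $\ZZ$-morphism, and therefore $\vphi$ --- a composition of $\ZZ$-morphisms --- is a $\ZZ$-morphism.

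The crux, and the point I would be most careful about, is that $\rho$ is by itself neither injective nor trace-preserving for $\sigma\otimes\sigma$ against $\sigma$, so one cannot argue generator-by-generator: the lemma holds precisely because within any $\mathfrak T$-morphism $\rho$ occurs only precomposed with a map of the form $\Ad_w\circ\iota'$, and on the range of such a map $\rho$ (restriction to the diagonal) preserves both injectivity and the trace. The remaining work --- the commutation/normal-form step for $g$, in particular keeping track of which tensor square the accumulated unitary lives in --- is routine bookkeeping.
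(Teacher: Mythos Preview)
Your proof is correct and follows essentially the same route as the paper: decompose $\vphi$ into segments passing through a tensor square and verify that the composite $\rho\circ(\text{middle maps})\circ\iota$ is a trace-preserving $*$-embedding via the explicit diagonal formula $h(g)(t)=w(t,t)(g(t)\otimes 1)w(t,t)^*$. You are in fact more careful than the paper, which jumps straight to the bare case $\rho\circ(\id\otimes 1_{pq})$ without writing out the normal-form step absorbing the intermediate $\Ad_u$ and $\psi\otimes\psi$ factors; your remark that $\rho$ is not itself trace-preserving and must be analyzed only on the range of $\Ad_w\circ\iota'$ is precisely the point the paper leaves implicit.
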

 \begin{proof}
    Any such $\vphi$ is either already a $\ZZ$-morphism or it is of the form 
    $$
    \begin{tikzcd}[column sep=0.1]
    (\Z_{p,q},\tau) \arrow[r, "\vphi_1"] & (\Z_{p_1,q_1},\tau_1) \arrow[r,"\id\otimes 1", "\text{or } 1\otimes \id"'] & (\Z_{p_1,q_1} \otimes \Z_{p_1,q_1},\tau_1\otimes \tau_1)\arrow[r,  "\vphi_2\otimes \vphi_2"] \arrow[out=120,in=60,loop,"\Ad_u"] & \dots & \\
     \dots\arrow[r, "\vphi_2\otimes \vphi_2"]& (\Z_{p_2,q_2} \otimes \Z_{p_2,q_2},\tau_2\otimes \tau_2)\arrow[r, "\rho"] \arrow[out=240,in=300,loop,swap,  "\Ad_v"] &(\Z_{p_2^2,q_2^2},\tau_2) \arrow[r,"\vphi_3"] &(\Z_{p',q'},\tau')  &&
    \end{tikzcd}
    $$
    for some $\vphi_1, \vphi_2,\vphi_3$  in $\ZZ$, unitaries $u$, $v$ and a $*$-homomorphism $\rho$ from (v). All of these maps except $\rho$ are trace-preserving $*$-embeddings. Therefore it is enough to show that $\vphi: (\Z_{p,q},\tau) \to (\Z_{p^2,q^2}, \tau)$ is trace-preserving, when $\vphi= \rho \circ (\id \otimes 1_{pq})$ (or $\vphi= \rho \circ (1_{pq}\otimes \id)$). First note that $\vphi$ is a $*$-embedding such that 
$$\vphi(f)(x) = f(x) \otimes 1_{pq}$$
for every $x\in [0,1]$ and $f\in \Z_{p,q}$. We have
\begin{align*}
\tau(\vphi(f)) & = \int_0^1 \Tr(f(x) \otimes 1_{pq}) d\tau(x)\\
&=\int_0^1 \Tr(f(x)) d\tau(x) = \tau(f).   
\end{align*}
\end{proof}

 Similarly, any $\mathfrak T$-morphism from  objects of the form  $(\Z_{p,q},\tau)$ to the objects of the form $(\Z_{p',q'} \otimes \Z_{p', q'} , \tau'\otimes \tau')$ is automatically a trace-preserving unital $*$-embedding.

\begin{lemma}\label{ZZ-expansion}
$\mathfrak T$ is a $\otimes$-expansion of $\ZZ$.
\end{lemma}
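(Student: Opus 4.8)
The plan is simply to run through the five conditions of Definition \ref{tensor expansion} in order, checking each against the explicit description of $\mathfrak T$; all of them are read off the definition, and the only one that calls for a moment's thought is the closure under inner automorphisms, which splits according to the two shapes of object. Note first that $\mathfrak T$ is genuinely a category — closed under composition of its morphisms — which is exactly the content of Lemma \ref{lemma-zz} together with the remark following it, so I may take this for granted.

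First, $\ZZ$ is a subcategory of $\mathfrak T$: every $\ZZ$-object is by definition a $\mathfrak T$-object, and every $\ZZ$-morphism is a $\mathfrak T$-morphism by clause (i). Second, for every $(\Z_{p,q},\tau)\in\ZZ$ the algebra $(\Z_{p,q}\otimes\Z_{p,q},\tau\otimes\tau)$ is by definition a $\mathfrak T$-object (there is no ambiguity in the tensor product since $\Z_{p,q}$ is nuclear). Third, if $\vphi\colon(\Z_{p,q},\tau)\to(\Z_{p',q'},\tau')$ is a $\ZZ$-morphism then $\vphi\otimes\vphi$ is a $\mathfrak T$-morphism by clause (iv). Fourth, for every $(\Z_{p,q},\tau)\in\ZZ$ the first and second factor embeddings $\id\otimes 1_{pq}$ and $1_{pq}\otimes\id$ into $\Z_{p,q}\otimes\Z_{p,q}$ are $\mathfrak T$-morphisms by clause (iii); these are precisely the maps $\id_{\A}\otimes 1_{\A}$ and $1_{\A}\otimes\id_{\A}$ of Definition \ref{tensor expansion} (here $1_{pq}$ stands for the unit of $\Z_{p,q}$, as is forced by the codomain).

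It remains to verify that for every $\mathfrak T$-object $\B$ and every unitary $u\in\B$ the inner automorphism $\Ad_u\colon\B\to\B$ is a $\mathfrak T$-morphism. If $\B=(\Z_{p,q}\otimes\Z_{p,q},\tau\otimes\tau)$ this is exactly clause (ii). If $\B=(\Z_{p,q},\tau)\in\ZZ$, then $\Ad_u$ is a unital $*$-automorphism of $\Z_{p,q}$, and it is $\tau$-preserving since $\tau$ is tracial, hence $\Ad_u$ is a $\ZZ$-morphism and so a $\mathfrak T$-morphism by clause (i). This exhausts the cases. I do not anticipate any real obstacle: the statement is a bookkeeping check against the definition, the substantive point (closure of $\mathfrak T$ under composition) having already been isolated in Lemma \ref{lemma-zz}; the only care needed is to match the notations of clause (iii) and Definition \ref{tensor expansion}.
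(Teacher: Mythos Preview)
Your proposal is correct and follows exactly the approach the paper takes: the paper's proof reads simply ``This is clear from Definition \ref{tensor expansion} and the definition of $\mathfrak T$,'' and your argument is precisely the unpacking of that sentence, verifying each clause of Definition \ref{tensor expansion} against clauses (i)--(iv) of the definition of $\mathfrak T$. The only point you add beyond the paper is the explicit observation that $\Ad_u$ on a $\ZZ$-object is itself a $\ZZ$-morphism (unital, injective, trace-preserving), which is indeed the one clause not literally listed among (i)--(v); this is a harmless and correct elaboration.
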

\begin{proof}
This is clear from Definition \ref{tensor expansion} and the definition of $\mathfrak T$.
\end{proof}

\begin{lemma}\label{Z>ZxZ}
The category $\ZZ$ is dominating in $\mathfrak T$.
\end{lemma}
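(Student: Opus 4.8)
The plan is to verify the two clauses $(\mathscr C)$ and $(\mathscr D)$ of Definition \ref{dom-def-1} for $\mathfrak C=\ZZ$ inside $\KK=\mathfrak T$; the key observation is that both clauses can be realized by exact equalities, so the tolerance $\epsilon$ will play no role. For cofinality $(\mathscr C)$, recall that a $\mathfrak T$-object is either of the form $(\Z_{p,q},\tau)$, which already lies in $\ZZ$, or of the form $(\Z_{p,q}\otimes\Z_{p,q},\tau\otimes\tau)$. In the first case the identity map suffices, and in the second the morphism $\rho$ of type (v), $\rho(g)(t)=g(t,t)$, sends $(\Z_{p,q}\otimes\Z_{p,q},\tau\otimes\tau)$ to $(\Z_{p^2,q^2},\tau)$; I would check that this target genuinely belongs to $\ZZ$, since $(p,q)=1$ forces $(p^2,q^2)=1$ so that $\Z_{p^2,q^2}$ is a prime dimension-drop algebra, and the trace induced by the faithful measure $\tau$ on $[0,1]$ remains faithful on it.

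For $(\mathscr D)$, suppose $\vphi:\A\to\B$ is a $\mathfrak T$-morphism with $\A=(\Z_{p,q},\tau)\in\ZZ$, and let $F\Subset\A$ and $\epsilon>0$ be given. I would pick $\beta$ according to the shape of $\B$: if $\B\in\ZZ$, take $\C=\B$ and $\beta=\id_\B$; if $\B=(\Z_{r,s}\otimes\Z_{r,s},\sigma\otimes\sigma)$, take $\C=(\Z_{r^2,s^2},\sigma)\in\ZZ$ and $\beta=\rho:\B\to\C$. In either case $\beta$ is a $\mathfrak T$-morphism whose codomain lies in $\ZZ$. Now set $\alpha:=\beta\circ\vphi$. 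Being a finite composition of maps of types (i)--(v) it is a $\mathfrak T$-morphism, and both its domain $\A$ and its codomain $\C$ lie in $\ZZ$, so Lemma \ref{lemma-zz} makes $\alpha$ automatically a $\ZZ$-morphism. Since $\alpha=\beta\circ\vphi$ holds on the nose, $\alpha\approx_{\epsilon,F}\beta\circ\vphi$ is immediate, and $(\mathscr D)$ follows, so $\ZZ$ dominates $\mathfrak T$.

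I do not anticipate a genuine obstacle here: the real content has already been isolated in Lemma \ref{lemma-zz} (that $\mathfrak T$ is closed under composition, and that any $\mathfrak T$-morphism between $\ZZ$-objects is actually a $\ZZ$-morphism), and the only moving part is the collapsing map $\rho$, whose purpose is precisely to drag a tensor-square object back into a prime dimension-drop algebra. The sole points needing care along the way are that primeness and faithfulness of the trace survive the passage from $\Z_{p,q}$ to $\Z_{p^2,q^2}$, both of which are immediate.
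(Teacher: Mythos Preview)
Your proposal is correct and follows essentially the same route as the paper: both use the diagonal map $\rho$ of type (v) to establish cofinality $(\mathscr C)$, and for $(\mathscr D)$ both post-compose with $\rho$ (or do nothing when $\B\in\ZZ$) and then invoke Lemma \ref{lemma-zz} to conclude that the resulting map between $\ZZ$-objects is already a $\ZZ$-morphism, so that $\alpha=\beta\circ\vphi$ exactly. Your additional remarks that $(p,q)=1$ implies $(p^2,q^2)=1$ and that faithfulness of $\tau$ persists are harmless bookkeeping details the paper leaves implicit.
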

\begin{proof}
The condition ($\mathscr C$) of Definition \ref{dom-def-1} is clear, since for any $(\Z_{p,q} \otimes \Z_{p,q},\tau \otimes \tau)\in\mathfrak T$ the diagonal map $\rho: (\Z_{p,q} \otimes \Z_{p,q},\tau \otimes \tau) \to (\Z_{p^2,q^2},\tau)$ of the form (v) is a $\mathfrak T$-morphism.

For ($\mathscr D$), note that by Lemma \ref{lemma-zz}, any  $\mathfrak T$-morphism $\vphi:(\Z_{p,q},\tau) \to (\Z_{p',q'}, \tau')$ is in $\ZZ$ and if $\psi: (\Z_{p,q},\tau) \to (\Z_{p',q'}\otimes \Z_{p',q'}, \tau'\otimes \tau')$ is a $\mathfrak T$-morphism, then $\rho \circ \psi: (\Z_{p,q},\tau) \to (\Z_{p'^2,q'^2}, \tau')$ is in $\ZZ$, where $\rho:(\Z_{p',q'}\otimes \Z_{p',q'}, \tau'\otimes \tau') \to (\Z_{p'^2,q'^2}, \tau')$ is again the diagonal map of the form (v). 
\end{proof}

\begin{corollary}
The Jiang-Su algebra $\Z$ is strongly self-absorbing.
\end{corollary}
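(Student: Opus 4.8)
The plan is to assemble the pieces that have been built up in the preceding sections and feed them into Theorem \ref{thm-F-ssa}. By Corollary \ref{ZZ has F-seq}, the category $\ZZ$ has a Fra\"\i ss\'e sequence and its Fra\"\i ss\'e limit is $(\Z,\nu)$, so in particular $\Z$ is the Fra\"\i ss\'e limit of $\ZZ$ (forgetting the trace, $\Z$ is the limit of a $\ZZ$-sequence regarded as a sequence of unital separable $C^*$-algebras and unital $*$-homomorphisms). By Lemma \ref{ZZ-expansion} the category $\mathfrak T$ is a $\otimes$-expansion of $\ZZ$, and by Lemma \ref{Z>ZxZ} the category $\ZZ$ dominates $\mathfrak T$. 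Thus $\ZZ$ dominates a $\otimes$-expansion of itself, which is exactly the hypothesis of Theorem \ref{thm-F-ssa}.

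The only remaining ingredient is that the Fra\"\i ss\'e limit $\D = \Z$ has approximate inner half-flip. This is not proved in the excerpt itself, so I would cite it: it is \cite[Proposition 8.3]{Jiang-Su}, and crucially its proof is elementary, relying only on basic properties of prime dimension-drop algebras and the $*$-embeddings used to build $\Z$ (this is emphasized in the introduction, where it is contrasted with the harder ``asymptotically central sequence'' half of the original argument). So I would simply invoke that fact here.

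Given these inputs, the proof is one line: since $\ZZ$ dominates the $\otimes$-expansion $\mathfrak T$ of itself (Lemmas \ref{ZZ-expansion} and \ref{Z>ZxZ}), and $\ZZ$ has Fra\"\i ss\'e limit $\Z$ (Corollary \ref{ZZ has F-seq}) which has approximate inner half-flip \cite[Proposition 8.3]{Jiang-Su}, Theorem \ref{thm-F-ssa} applies and yields that $\Z$ is strongly self-absorbing. There is essentially no obstacle at this stage — all the work has been front-loaded into Theorem \ref{thm-F-ssa} (the weak Fra\"\i ss\'e uniqueness argument and the tensor-product bookkeeping) and into Section \ref{sec-Z is fl} (identifying $\Z$ as the Fra\"\i ss\'e limit of $\ZZ$ by hand). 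The one point I would double-check is a compatibility of settings: Theorem \ref{thm-F-ssa} is stated for categories of unital separable $C^*$-algebras and unital $*$-homomorphisms with no reference to traces, whereas $\ZZ$ and $\mathfrak T$ carry fixed faithful traces; but since every $\mathfrak T$-morphism into a dimension-drop object is automatically a $\ZZ$-morphism (Lemma \ref{lemma-zz}) the trace-decorations are harmless, and the domination/expansion conditions verified above transfer verbatim to the underlying $C^*$-category. Hence the corollary follows.

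\begin{proof}
By Lemma \ref{ZZ-expansion}, $\mathfrak T$ is a $\otimes$-expansion of $\ZZ$, and by Lemma \ref{Z>ZxZ}, $\ZZ$ dominates $\mathfrak T$. Hence $\ZZ$ dominates a $\otimes$-expansion of itself. By Corollary \ref{ZZ has F-seq}, the category $\ZZ$ has a Fra\"\i ss\'e limit, which is the Jiang-Su algebra $\Z$. By \cite[Proposition 8.3]{Jiang-Su}, $\Z$ has approximate inner half-flip. Therefore, by Theorem \ref{thm-F-ssa}, $\Z$ is strongly self-absorbing.
\end{proof}
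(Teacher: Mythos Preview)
Your proof is correct and essentially identical to the paper's own proof: both invoke Corollary \ref{ZZ has F-seq}, Lemmas \ref{ZZ-expansion} and \ref{Z>ZxZ}, \cite[Proposition 8.3]{Jiang-Su} for approximate inner half-flip, and then apply Theorem \ref{thm-F-ssa}. Your additional remark about the trace-decorations being harmless is a reasonable sanity check but is not needed for the argument.
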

\begin{proof}
The Jiang-Su algebra with its unique trace $(\Z, \nu)$ is the Fra\"\i ss\'e limit of $\ZZ$ (Corollary \ref{ZZ has F-seq}) and has approximately inner  half-flip (\cite[Proposition 8.3]{Jiang-Su}).  The category $\mathfrak T$ is a $\otimes$-expansion of $\ZZ$ (Lemma \ref{ZZ-expansion}) and $\ZZ$ dominates $\mathfrak T$ (Lemma \ref{Z>ZxZ}). 
Therefore it follows from  Theorem \ref{thm-F-ssa} that $\Z$ is strongly self-absorbing. 
\end{proof}

\bibliographystyle{amsplain}

\end{document}